\documentclass[11pt]{article}
\usepackage{graphicx} 
\usepackage[left=2.5cm,right=2.5cm,top=2.5cm,bottom=2.5cm]{geometry}
\usepackage{cite}
\usepackage{caption}
\usepackage{subcaption}
\usepackage{float}
\usepackage{rotating}
\usepackage{stackengine}
\usepackage{stmaryrd}
\usepackage{placeins}

\usepackage{amsmath}
\usepackage{amssymb}
\usepackage{amsthm}
\usepackage{amsfonts}
\usepackage[all]{xy}
\usepackage{verbatim}
\usepackage{mathtools}
\usepackage{tikz}
\usetikzlibrary{calc,matrix}
\usepackage{colortbl}
\usepackage{booktabs, makecell}
\usepackage[most]{tcolorbox}

\usepackage{hyperref}
\usepackage[normalem]{ulem}

\theoremstyle{definition}
\newtheorem{theorem}{Theorem}

\newtheorem{lemma}[theorem]{Lemma}
\newtheorem{definition}[theorem]{Definition}
\newtheorem{example}[theorem]{Example}
\newtheorem{remark}[theorem]{Remark}
\newtheorem{conjecture}[theorem]{Conjecture}
\newtheorem{corollary}[theorem]{Corollary}

\newcommand{\qmol}{q\text{-MOLS}(n)}
\newcommand{\qoa}{(q+2)\text{-OA}(n)}
\newcommand{\qgp}{(q+2)\text{-GP}(n)}

\DeclareMathOperator{\Aut}{Aut}
\DeclareMathOperator{\Stab}{Stab}
\DeclareMathOperator{\Id}{Id}

\title{Transitive Sets of Mutually Orthogonal Latin Squares}
\author{Amadou Keita, Ilya Shapiro}

\begin{document}

\maketitle

\begin{abstract}
We investigate MacNeish's conjecture (known to be false in general) in the setting of what we call ``transitive" Mutually Orthogonal Latin Squares (MOLS).  When we restrict our attention to ``simply transitive" MOLS, we find that the conjecture holds. We provide some partial results towards the transitive case, as well as the outcome of a computer search, which introduces a new construction of MOLS.  In particular, we were unable to find any transitive large (conjecture-violating) sets of MOLS in the literature.
\end{abstract}

\section{Introduction}

A \emph{Latin square} of order $n$ is an $n\times n$ array on the symbols in which each symbol appears exactly once in every row and exactly once in every column. 
Let $L=\{l_{ij}\}$ be a Latin square of order $n$ on a symbol set $S$ and $L'=\{l'_{ij}\}$ be another Latin square of order $n$ on a symbol set $S'$.
\begin{definition}\label{def:orthogonality}
We say that $L$ and $L'$ are
\emph{orthogonal} if, for each $a \in S$ and each $b \in S'$, there exists a unique
pair $(i,j)$ such that $l_{ij} = a$ and $l'_{ij} = b$. In this case, we also
say that $L'$ is an \emph{orthogonal mate} of $L$.
\end{definition}
A collection of Latin squares is said to be \emph{mutually orthogonal} (MOLS) if every distinct pair in the collection is orthogonal.

Write the distinct prime-power factorization of $n$ as $p_1^{v_1}p_2^{v_2}\cdots p_u^{v_u}$ and define
\[
f(n)=\min\!\{p_1^{v_1},p_2^{v_2},\dotsc,p_u^{v_u}\}-1.
\]
Classical works of MacNeish~\cite{macneish1922euler} and Mann~\cite{mann1942construction} guarantee at least $f(n)$ MOLS of order $n$. It was natural to wonder whether this lower bound was tight for all $n$. If that were the case, Euler’s conjecture (as cited in~\cite{nougier1954essai}) would follow: there would be no orthogonal pair whenever $n\equiv 2\pmod{4}$, since then $f(n)=1$. Let $N(n)$ denote the maximum number of MOLS of order $n$. MacNeish's conjecture states that $N(n)=f(n)$ but Parker~\cite{parker1959construction} demonstrated that in some instances $N(n)>f(n)$, disproving the hoped-for equality in general.

Bose and Shrikhande~\cite{MR111695} generalized Parker’s ideas and produced infinite families of counterexamples to Euler’s conjecture, including all $n$ of the form $36w+22$, $w$ a nonnegative integer. They also compiled for $n\le 150$ a list of orders with $N(n)>f(n)$ and gave an explicit orthogonal pair for $n=22$, the smallest counterexample they established. More broadly, Bose and Shrikhande~\cite{MR111695} provided improved lower bounds for $N(n)$ for many $n\ge 22$.

The MacNeish–Mann bound states $N(n)\ge f(n)$. Bose and Shrikhande~\cite{MR111695} further obtained the following product inequality:
If $n=n_1n_2\cdots n_u$, then
\[
N(n)\ \ge\ \min\{N(n_1),N(n_2),\dotsc,N(n_u)\}.
\]
Their argument uses a general result on orthogonal arrays due to Bose~\cite{bose1950note} and Bush~\cite{MR49145}. Let $A=\{a_{ij}\}$ be a matrix with entries in $\{0,1,\dotsc,n-1\}$ having $r$ rows and $m=\lambda n^d$ columns. For any choice of $d\le r$ rows, view each column of the resulting $d\times 1$ submatrix as an ordered $d$-tuple. We call $A$ an orthogonal array 
of index $\lambda$ if every $d$-tuple over the set $\{0,\dotsc,n-1\}$ appears exactly $\lambda$ times among these columns. 

It is well-known~\cite{bush1952orthogonal};~\cite[Thm.~$11.1.1$]{keedwell2015latin} that an an orthogonal array of $q+2$ constraints, $n$ levels, strength $2$, and index $1$ exists if and only if there is a set of $q$ MOLS of order $n$. Since $N(21)\ge 4$~\cite{parker1959construction}, a formulation of the equivalence introduced in~\cite{bose1950note, MR49145} can be used to obtain
\[
N(105)\ \ge\ \min\{N(21),N(5)\}\ \ge\ 4,
\]
whereas
\[
f(105)=\min\{3,5,7\}-1=2.
\]

Using the method of differences, Parker~\cite{parker1959orthogonal} also proved $N(n)\ge 2$ for $n=\tfrac{1}{2}(3q-1)$ with $q$ a prime power satisfying $q\equiv 3\pmod{4}$---in particular for $n=10$. Bose, Shrikhande, and Parker~\cite{MR122729} strengthened the main theorem of~\cite{MR111695}, obtained sharper bounds for $N(n)$, established $N(n)\ge 2$ for $n\in\{14,26\}$ and for $n=12t+10$ where $t$ is a nonnegative integer, and showed that Euler’s conjecture fails for all $n=4t+2>6$.

\begin{definition}
We say that $L'$ is \emph{isotopic} to $L$ if there exist bijections 
\[
\alpha,\beta\colon \{0,1,\dotsc,n-1\}\to\{0,1,\dotsc,n-1\}
\quad
\text{and}
\quad
\gamma\colon S\to S'
\]
with
\[
\gamma(l_{ij})=l'_{\alpha(i),\beta(j)} \quad \text{for all } i,j;
\]
the triple $(\alpha,\beta,\gamma)$ is an \emph{isotopy} $L\to L'$, and an \emph{autotopy} 
when $L'=L$. 
\end{definition}
\begin{definition}
    A Latin square is \emph{based on a group} if it is isotopic to a Cayley table of a group.
\end{definition}
 
 A set of cells meeting each row and each column exactly once, whose entries are all distinct, is a \emph{transversal} of a Latin square.
Suppose $L=\{l_{ij}\}$ is the Cayley table of a quasigroup 
\[Q=\{q_0,\dotsc,q_{n-1}\}
\quad
\text{and}
\quad
T=\{(0,j_0),\dotsc,(n-1,j_{n-1})\}
\]
is a transversal of $L$.
Then the map $\theta\colon Q\to Q$ defined by $\theta(q_i)=q_{j_i}$ is a bijection. Since the entries in $T$ are distinct, the map $q_i\mapsto q_i\theta(q_i)=q_iq_{j_i}$ is also a bijection. For a quasigroup $Q$, a bijection $\theta\colon Q\to Q$ with $q\mapsto q\theta(q)$ also bijective is called a \emph{complete mapping}.
A Latin square based on a group $G$ has an orthogonal mate exactly when $G$ admits complete mappings~\cite{evans2018orthogonal}; such a $G$ is called \emph{admissible}. 

Euler’s conjecture---that a Latin square of order $4n+2$ has no orthogonal mate---was refuted for every $n\ge 2$ by Bose, Shrikhande, and Parker~\cite{MR122729}. Their work was inspired by Parker’s~\cite{parker1959construction} $1959$ construction of four MOLS of order $21$, which provided the first counterexample to MacNeish’s conjecture. Since MacNeish’s conjecture generalizes Euler’s, and Parker’s example disproving MacNeish’s conjecture used the group $\mathbb{Z}_{21}$, it is natural to seek counterexamples from Latin squares based on groups to Euler’s conjecture. In other words, are there admissible groups of order $n$ when $n\equiv 2\pmod{4}$?

Interestingly, Fleisher’s $1934$ thesis~\cite{fleisher1934eulerian} and Mann’s~\cite{mann1942construction} $1942$ construction established that Latin squares based on groups satisfy Euler’s conjecture. Mann’s proof was later rediscovered by Jungnickel~\cite{jungnickel1980difference}. In a subsequent work, Mann~\cite{mann1944orthogonal} identified structural conditions on a Latin square that preclude the existence of an orthogonal mate.

In this paper we restrict sets of MOLS, requiring the autotopy group to act transitively (see Definition~\ref{def:transitive}), which we propose ensures that MacNeish’s conjecture holds. We do prove the conjecture for simply transitive sets (Definition~\ref{def:simtransitive}; Theorem~\ref{thm:stransconj}). Table~\ref{tbl:autotopy-mols} shows that the large sets of MOLS that we examined are not transitive.

The paper is organized as follows. Section~\ref{sec:mols-and-oa} reviews the correspondence between sets of MOLS and orthogonal arrays and also introduces autotopy groups (see Definition~\ref{def:autotopy-group}) in this context. 
In Section~\ref{sec:transitive-mols-and-groups} we define transitive (Definition~\ref{def:transitive}) and
simply transitive (Definition~\ref{def:simtransitive}) sets of MOLS, introduce
group packets (Definition~\ref{def:group-packet}), establish a bijective correspondence between transitive
MOLS and group packets (Theorem~\ref{thm:m1}), and describe its restriction to the
simply transitive case
(Corollary~\ref{cor:strans}). Section~\ref{sec:macneish-conjecture} applies this
restricted correspondence to MacNeish’s conjecture:
general reduction results are obtained (Theorems~\ref{thm:m2} and~\ref{thm:m3}) 
and a proof that the conjecture
holds for simply transitive sets of MOLS (Theorem~\ref{thm:stransconj}) is given. Section~\ref{sec:latin-square-methods} collects structural results on autotopy groups of Latin squares based on groups and
sets of MOLS in which each Latin square is based on a group, and describes
computational methods for constructing and classifying Latin squares arising from
group packets. Table~\ref{tbl:transitivity-classification} summarizes the classifications of Latin squares of orders at most $10$ according to the action of their autotopy groups. For each order, we list the number of main classes and then record how many of these are based on a group, how many are simply transitive but not based on a group, how many are transitive but not simply transitive, and how many are non-transitive. Table~\ref{tbl:autotopy-mols} summarizes examples of large sets of MOLS that have non-prime power orders together with the order of their autotopy
groups. For each order, we record the size of the MOLS set, the order of the
autotopy group of the MOLS set, and cite sources of the constructions.

\section*{Acknowledgments}

The authors thank Curtis Bright and Brett Stevens for helpful discussions. The authors also thank Curtis Bright for detailed comments on the drafts of this paper.

\section{Sets of MOLS and orthogonal arrays}\label{sec:mols-and-oa}

We recall the definition of an orthogonal array.

\begin{definition}
An orthogonal array of size $D$ with $c$ constraints, $n$ levels, strength $d$, and index $\lambda$ is a $c\times D$ matrix $A$ having $n$ different elements and with the property that each different ordered $d$-tuple of elements occurs exactly $\lambda$ times as a column in any $d$-rowed submatrix of $A$~\cite{keedwell2015latin}, it follows that $D=\lambda n^d$.
\end{definition}

There is a well-known correspondence between sets of MOLS and orthogonal arrays. Specifically, an orthogonal array of $q+2$ constraints, $n$ levels, strength $2$, and index $1$ is equivalent to a set of $q$ MOLS of order $n$~\cite{bush1952orthogonal};~\cite[Thm.~$11.1.1$]{keedwell2015latin}. From now on, we specialize in this case and so use the following definition.

\begin{definition}
An element of $\qoa$ is an orthogonal array, i.e., the data of $(S,X_i,\pi_i)_{i=1}^{q+2}$ where $\pi_i\colon S\to X_i$ such that $$\pi_{ij}=\pi_i\times\pi_j\colon S\to X_i\times X_j$$ is a bijection for all $i\neq j$.  
\end{definition}
It is immediate that all $X_i$ have the same size and we require $|X_i|=n$.
\begin{definition}
An isotopy between orthogonal arrays $(S,X_i,\pi_i)$ and $(S',X_i',\pi_i')$ is a collection of bijections $(\sigma,\sigma_i)$ with $\sigma\colon S\to S'$ and $\sigma_i\colon X_i\to X_i'$ such that $$\pi'_i\sigma=\sigma_i\pi_i.$$  
\end{definition}

We note that our $\qoa$ would normally be denoted by $\text{OA}(n^2,q+2,n,2)$.

\begin{definition}\label{def:autotopy-group}
The \emph{autotopy group} of an orthogonal array $(S,X_i,\pi_i)_{i=1}^{q+2}$ is
$$\Aut(S,X_i)=\Bigl\{\sigma=(\sigma_i)\in\prod_{i=1}^{q+2}\Sigma_{X_i}:\sigma(S)=S\Bigr\},$$
where $\Sigma_{X_i}$ is the group of bijections from $X_i$ to $X_i$ and $S$ is viewed as a subset of the product via $\prod\pi_i\colon  S\to \prod X_i$. In fact, we will sometimes denote the array by $$S\subset\prod_{i=1}^{q+2}X_i.$$
\end{definition}

Note that the autotopy group is just the group of isotopies from an array to itself. The reader can observe that by definition, $\Aut(S,X_i)$ acts on the sets $S$ and $X_i$  (via $\sigma_i$ on the latter).  Recall that for a group $G$ acting on sets $X$ and $Y$, a map $f\colon X\to Y$ is $G$-equivariant if $f(gx)=gf(x)$  for all $g\in G$; the maps $\pi_i$ are $\Aut(S,X_i)$-equivariant. Moreover, $\sigma$ determines the $\sigma_i$'s uniquely, and conversely, for any $i\neq j$, knowing $\sigma_i$ and $\sigma_j$ determines the rest of the structure of the autotopy/isotopy.  

\section{Transitive MOLS and groups}\label{sec:transitive-mols-and-groups}

In this section, we introduce our restricted class of sets of MOLS and show that they can be studied using group theoretic data.

\begin{definition}\label{def:transitive}
We say that a set of MOLS is \emph{transitive} if 
any element of the orthogonal array corresponding to the set of MOLS can be sent to any other using an element of the autotopy group of the set of MOLS. More precisely, we require that $\Aut(S,X_i)$ act transitively on $S$. We also say that the orthogonal array is transitive.
\end{definition}
The reader should be warned that J\"ager et al.~\cite{jager2019enumeration} studied a different notion of transitivity.
We can further restrict our setting to the following:

\begin{definition}\label{def:simtransitive}
We say that a set of MOLS is \emph{simply transitive} if its autotopy group has a subgroup such that any element of the orthogonal array corresponding to the set of MOLS can be sent to any other using precisely one element of the subgroup. More precisely, we require that there is a subgroup $G\leqslant \Aut(S,X)$ that acts simply transitively on $S$.  We also say that the orthogonal array is simply transitive.  
\end{definition}
A simply transitive set of MOLS is transitive.
\begin{remark}
Note that both notions---transitive and simply transitive sets of MOLS---are closed under products of sets of MOLS. 
\end{remark}

We note that there are transitive MOLS that are not simply transitive. See Table \ref{tbl:transitivity-classification}, where we note the existence of a Latin square of order $9$ and a Latin square of order $10$ which are transitive but not simply transitive. Further details regarding their construction can be found in Examples~\ref{exa:order-nine} and~\ref{exa:order-ten}.

Let us recall some basics from group theory: $G$ is a group and   $H \leqslant G$   a subgroup of $G$. 
For any $g \in G$, the set 
\[
gH \;=\; \left\{\, gh \mid h \in H \,\right\}
\]
is a left coset of $H$ in $G$. 
The index of $H$ in $G$, denoted $[G : H]$, is the number of distinct left cosets 
of $H$ in $G$.  The set of left cosets is denoted by $G/H$. If $G$ acts on a set $S$ and $s\in S$, the stabilizer of $s$ in $G$ is the set 
\[
\Stab_G (s) \;=\; \left\{\, g\in G \mid gs = s \, \right\}.
\]

We now introduce the group theoretic data that will be shown to characterize the transitivity of MOLS.

\begin{definition}\label{def:group-packet}
We call the data $(G,H_i)_{i=1}^{q+2}$ a \emph{group packet}, an element of $\qgp$, if $G$ is a group, $H_i\leqslant G$  are subgroups such that there is a subgroup $K$ with $$H_i\cap H_j=K$$ for all $i\neq j$ and the indices are as follows: $$[G:H_i]=[H_i:K]=n.$$
\end{definition}

We sometimes write $(G,H_i) = (G,H_i)_{i=1}^{q+2}$ for brevity.

\begin{definition}
Given two group packets $(G,H_i)$ and $(G',H'_i)$, an \emph{admissible morphism} 
\[
\alpha\colon  (G,H_i)\to(G',H'_i)
\]
is a group homomorphism $\alpha\colon  G\to G'$ such that $\alpha(H_i)\subset H'_i$ for all $i$; and the induced maps $$\bar{\alpha}\colon G/H_i\to G'/H'_i$$ $$gH_i\mapsto \alpha(g)H_i'$$ are bijections for all $i$.
\end{definition}

\begin{definition}\label{def:equivalence-of-packets}
Two group packets $(G,H_i)$ and $(G',H'_i)$ are said to be \emph{equivalent} if there is a third group packet $(G'',H''_i)$ and admissible morphisms $\alpha$, $\beta$ such that: $$(G,H_i)\stackrel{\alpha}{\to}(G'',H''_i)\stackrel{\beta}{\leftarrow}(G',H'_i).$$ 
\end{definition}

Definition~\ref{def:equivalence-of-packets} may cause concern as it might not be immediately obvious why what is described is an equivalence relation, i.e., why is it transitive?   However, this is addressed in  Corollary \ref{cor:eq} below.

\begin{definition}
We say that a group packet $(G,H_i)$ is \emph{disjoint} if the intersections between $H_i$'s are trivial, i.e.,  $K=\{e\}$.
\end{definition}

\begin{remark}\label{rem:prod}
It is immediate that given group packets $(G^{(\alpha)},H_i^{(\alpha)})_{i=1}^{q+2}$ we can form a group packet $$\left(\prod_\alpha G^{(\alpha)},\prod_\alpha H_i^{(\alpha)}\right)_{i=1}^{q+2}$$ and that if the original packets are all disjoint, then so is the new one.
\end{remark}

\begin{lemma}\label{lem:arrtopack}
Let  $(S,X_i,\pi_i)$ with $|X_i|=n$ be  a transitive orthogonal array. Choose $s\in S$, and set $x_i=\pi_i(s)\in X_i$.  Then $$G=\Aut(S,X_i)\quad K=\Stab_G(s)\quad  H_i=\Stab_G(x_i)$$ is a group packet in $\qgp$.
\end{lemma}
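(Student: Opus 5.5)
We need to check three things for $(G,H_i)$: each $H_i$ is a subgroup of $G$; $H_i\cap H_j=K$ for all $i\neq j$; and $[G:H_i]=[H_i:K]=n$. The approach is to use orbit–stabilizer for the action of $G=\Aut(S,X_i)$ on $S$ and on the $X_i$, together with the bijectivity of the maps $\pi_{ij}$. Subgroups are immediate, since stabilizers are subgroups.

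\textbf{Step 1: $H_i\cap H_j=K$.} The maps $\pi_i$ are $G$-equivariant, so $g\in K$ fixes $\pi_i(s)=x_i$ for every $i$. Hence $K\subseteq H_i$ for all $i$, and in particular $K\subseteq H_i\cap H_j$.

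For the reverse inclusion, take $i\neq j$ and $g\in H_i\cap H_j$. Then
\[
\pi_{ij}(gs)=(g x_i,g x_j)=(x_i,x_j)=\pi_{ij}(s).
\]
Since $\pi_{ij}$ is a bijection, $gs=s$, so $g\in K$.

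\textbf{Step 2: the index computations.} Transitivity of $G$ on $S$ implies transitivity on each $X_i$, because $\pi_i$ is surjective (it is a component of the bijection $\pi_{ij}$) and equivariant. Orbit–stabilizer then gives $[G:H_i]=|X_i|=n$ and $[G:K]=|S|$.

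Bijectivity of $\pi_{ij}$ gives $|S|=n^2$. Therefore
\[
[H_i:K]=\frac{[G:K]}{[G:H_i]}=\frac{n^2}{n}=n .
\]

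\textbf{Finiteness of $G$.} Index arithmetic is only safe if $G$ is finite, so this needs a justification. By Definition~\ref{def:autotopy-group}, $G$ is a subgroup of the finite group $\prod\Sigma_{X_i}$, so it is finite. Even without that observation, the indices $[G:K]$ and $[G:H_i]$ are finite by orbit–stabilizer, and the tower law $[G:K]=[G:H_i][H_i:K]$ holds for finite indices.

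The only potentially delicate point is the case $q+2\geq 2$ with $n$ arbitrary. No other case needs separate care, since the definition of a group packet only requires a common pairwise intersection $K$, and Step 1 applies uniformly to all pairs $i\neq j$. The main thing to get right is that the intersection statement genuinely requires both $\pi_{ij}$-injectivity and $K\subseteq H_i$.
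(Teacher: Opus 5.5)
Your proof is correct and follows the paper's argument. Equivariance of the $\pi_i$ plus transitivity gives $[G:H_i]=|X_i|=n$; the paper phrases this as the identifications $S\cong G/K$ and $X_i\cong G/H_i$. Bijectivity of $\pi_{ij}$ then gives $H_i\cap H_j=\Stab_G(x_i,x_j)=\Stab_G(s)=K$ and $[H_i:K]=n$. Your explicit finiteness remark is a harmless addition, while the closing sentence about ``$q+2\geq 2$'' says nothing and can be dropped.
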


\begin{proof}
With $G,H_i,K$ as in the statement of the Lemma, we have that $\pi_i\colon S\to X_i$ is $G$-equivariant and since the action of $G$ on $S$ is transitive, then so it is on $X_i$.  Note that $K\leqslant H_i$ so that for every $i$ we have a commutative diagram:
\begin{equation}\label{cdag}\xymatrix{S\ar[d]_{\pi_i} &  &G/K\ar[d]^{gK\mapsto gH_i}\ar[ll]_{\simeq}^{gs\mapsfrom gK}\\
X_i & & G/H_i\ar[ll]_{\simeq}^{gx_i\mapsfrom gH_i}}\end{equation} and we immediately get $[G:H_i]=|X_i|=n$.  Since $\pi_{ij}\colon S\to X_i\times X_j$ is a $G$-equivariant bijection, so $$H_i\cap H_j=\Stab_G(x_i,x_j)=\Stab_G(s)=K$$ and furthermore, $[H_i:K]=n$.  Thus, we obtained a group packet $(G,H_i)$, an element of $\qgp$.
\end{proof}

\begin{lemma}\label{lem:packtoarr}
Let $(G,H_i)$ be a group packet in $\qgp$, then $$G/K\subset\prod_{i=1}^{q+2}G/H_i$$ is a transitive orthogonal array in $\qoa$.
\end{lemma}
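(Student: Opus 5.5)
We take $S=G/K$, $X_i=G/H_i$, and let $\pi_i\colon G/K\to G/H_i$ be the projection $gK\mapsto gH_i$. This map is well defined because $K\leqslant H_i$: for $i\neq j$ we have $K=H_i\cap H_j\subset H_i$, and $q+2\geq 2$ guarantees such a $j$ exists. Each $\pi_i$ is $G$-equivariant for left multiplication, and $|X_i|=[G:H_i]=n$. Two things remain to show: that $\pi_{ij}=\pi_i\times\pi_j$ is a bijection for every $i\neq j$, and that the autotopy group acts transitively on $S$.

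\textbf{Injectivity of $\pi_{ij}$.} Suppose $gH_i=g'H_i$ and $gH_j=g'H_j$. Then $g^{-1}g'\in H_i\cap H_j=K$, so $gK=g'K$.

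\textbf{Bijectivity by counting.} Multiplicativity of indices gives
\[
|G/K|=[G:H_i][H_i:K]=n\cdot n=n^2=|G/H_i\times G/H_j|.
\]
An injective map between finite sets of equal size is a bijection. Strictly, the definition of a group packet does not say $G$ is finite. The argument only needs the indices to be finite, and $[G:K]=[G:H_i][H_i:K]$ holds for arbitrary groups with finite indices, so this step goes through in general.

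\textbf{Transitivity.} Each $g\in G$ gives a tuple $(\sigma_i)=(gH_i'\mapsto ggH_i')$ in $\prod\Sigma_{X_i}$. This tuple preserves the image of $S$ in $\prod X_i$, because
\[
\textstyle\prod\pi_i(g g'K)=(gg'H_i)_i
\]
lies in that image. So we have a homomorphism $G\to\Aut(S,X_i)$. Its image acts transitively on $S=G/K$, since $G$ already acts transitively on its own coset space. Hence $\Aut(S,X_i)$ acts transitively, and the array is transitive.

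The main obstacle is mild: it is the bookkeeping that $S$, as a subset of the product via $\prod\pi_i$, is genuinely an orthogonal array. Concretely, $\prod\pi_i$ should be injective, which already follows from injectivity of a single $\pi_{ij}$. We should also check that the $G$-action lands in the autotopy group, even though $G$ need not act faithfully. Neither point requires more than the equivariance of the projections.
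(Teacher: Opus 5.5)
Your proposal is correct and follows the paper's proof essentially step for step. Both arguments embed $G/K$ into $G/H_i\times G/H_j$ using $H_i\cap H_j=K$, get bijectivity by counting $|G/K|=[G:H_i][H_i:K]=n^2$, and obtain transitivity from the left-multiplication homomorphism $G\to\Aut(G/K,G/H_i)$. Your remarks on well-definedness ($K\leqslant H_i$) and on $G$ possibly being infinite are extra bookkeeping the paper leaves implicit.
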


\begin{proof}
Given a group packet $(G,H_i)$ in $\qgp$, let $S=G/K$ and $X_i=G/H_i$ (so that $|X_i|=n$). We have $$G/K\subset\prod_{i=1}^{q+2}G/H_i$$ $$gK\mapsto (gH_i)_{i=1}^{q+2}$$ since $H_i\cap H_j=K$ implies that $G/K$ embeds into $G/H_i\times G/H_j$ for any $i\neq j$.  Furthermore, $$|G/K|=[G:H_i][H_i:K]=n^2=[G:H_i][G:H_j]=|G/H_i\times G/H_j|$$ so that the embedding is actually a bijection and $(S,X_i)$ is an orthogonal array in $\qoa$.

Note that there is a  group homomorphism $$f\colon G\to \Aut(G/K, G/H_i)$$ $$g\mapsto f_g(yK)=gyK$$ and since $G$ acts transitively on $G/K$, so does $\Aut(G/K, G/H_i)$, thus, we obtained a transitive orthogonal array.
\end{proof}

\begin{remark}\label{rem:OAk}
If we relax the definition of a group packet $(G,H_i)_{i=1}^{q+2}$ so that $H_i\leqslant G$ with $$|H_i\cap H_j|\leq k$$ for all $i\neq j$ and $|G|=kn^2$, while $[G:H_i]=n$, then $$G\to\prod_i G/H_i$$ is a transitive element of $\text{OA}(kn^2,q+2,n,2)$, i.e., the index of the array is now $k$.  If $(G,H_i)$ is indeed a group packet, then the resulting array of index $k$ can be ``folded'' into one of index $1$.

Note that in light of the other conditions, $|H_i\cap H_j|\leq k$ is equivalent to $|H_i\cap H_j|=k$. Indeed, $G/(H_i\cap H_j)$ embeds into $G/H_i\times G/H_j$ and while the latter has size $n^2$, the former has size at least $n^2$.
\end{remark}

\begin{remark}
Note that the kernel of $f\colon G\to \Aut(G/K, G/H_i)$ is $K_0=\bigcap_{g\in G}gKg^{-1}$.  Furthermore, the packet $(G/K_0,H_i/K_0)$ produces a transitive orthogonal array that is  canonically isotopic to the one produced by $(G,H_i)$.  Thus, if $G$ is Abelian, then its group packet can be replaced with one that has a trivial $K$.
\end{remark}

\begin{definition}
We say that a group packet $(G,H_i)$ is \emph{reduced} if $\bigcap_{g\in G}gKg^{-1}=\{e\}$.  Equivalently, $G$ embeds into $\Aut(G/K, G/H_i)$. 
\end{definition}

Observe that any group packet is equivalent to a reduced one.

\begin{remark}\label{rem:admiss}
Following Lemma \ref{lem:packtoarr} by Lemma \ref{lem:arrtopack} we have that $(G,H_i)$ produces a group packet $$(\Aut(G/K, G/H_i), \Stab_{\Aut(G/K, G/H_i)}H_i)$$ and $f\colon G\to \Aut(G/K, G/H_i)$ is an admissible morphism.
\end{remark}

\begin{lemma}\label{lem:iff}
An admissible morphism $\alpha\colon  (G,H_i)\to(G',H'_i)$ between two group packets induces an isotopy of the resulting (transitive) orthogonal arrays.  Furthermore, we have a commutative diagram of admissible morphisms:
\begin{equation}\label{eq:f}\xymatrix{G\ar[r]^\alpha\ar[d]_{f} & G'\ar[d]_f\\
\Aut(G/K, G/H_i)\ar[r]^\simeq_{\alpha_*} & \Aut(G'/K', G'/H'_i)
}\end{equation}
\end{lemma}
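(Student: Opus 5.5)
The plan is to build the isotopy directly from $\alpha$, then define $\alpha_*$ by conjugating with that isotopy and check the square.

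\textbf{Step 1: the isotopy.} Write $K=H_i\cap H_j$ and $K'=H'_i\cap H'_j$ for $i\neq j$.

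Since $\alpha(H_i)\subset H'_i$ for every $i$, we get
\[
\alpha(K)=\alpha(H_i\cap H_j)\subset H'_i\cap H'_j=K'.
\]
So we may set $\sigma\colon G/K\to G'/K'$, $gK\mapsto \alpha(g)K'$, and $\sigma_i=\bar\alpha\colon G/H_i\to G'/H'_i$. By admissibility each $\sigma_i$ is a bijection.

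The required compatibility $\pi'_i\sigma=\sigma_i\pi_i$ holds by construction, since both sides send $gK$ to $\alpha(g)H'_i$.

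To see that $\sigma$ is a bijection, fix $i\neq j$. By Lemma~\ref{lem:packtoarr}, $\pi_{ij}\colon G/K\to G/H_i\times G/H_j$ and $\pi'_{ij}$ are bijections. From the compatibility,
\[
\sigma=(\pi'_{ij})^{-1}\circ(\sigma_i\times\sigma_j)\circ\pi_{ij},
\]
which is a composite of bijections. Hence $(\sigma,\sigma_i)$ is an isotopy.

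\textbf{Step 2: defining $\alpha_*$.} Set $\alpha_*(\tau)=(\sigma,\sigma_i)\circ\tau\circ(\sigma,\sigma_i)^{-1}$, componentwise.

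Conjugation by an isotopy sends autotopies to autotopies. It is a group isomorphism, with inverse given by conjugation by the inverse isotopy. This is why the bottom arrow is an isomorphism.

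\textbf{Step 3: commutativity.} We need $\alpha_*(f_g)=f_{\alpha(g)}$, and it suffices to check on $G'/K'$ and on each $G'/H'_i$. Every element of $G'/K'$ has the form $\sigma(yK)=\alpha(y)K'$, because $\sigma$ is surjective. On such an element,
\[
\sigma f_g\sigma^{-1}(\alpha(y)K')=\sigma(gyK)=\alpha(g)\alpha(y)K'=f_{\alpha(g)}(\alpha(y)K').
\]
The same computation works on each $G'/H'_i$.

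\textbf{Step 4: the maps are admissible morphisms.} By Remark~\ref{rem:admiss}, both vertical maps $f$ are admissible. The packet structure on $\Aut$ uses the stabilizers of the base points $H_i$, respectively $H'_i$. Since $\sigma_i(H_i)=\alpha(e)H'_i=H'_i$, conjugation by the isotopy sends the stabilizer of $H_i$ onto the stabilizer of $H'_i$. The induced maps on coset spaces are identified, via the diagram~\eqref{cdag}, with $\sigma_i$, which is bijective. So $\alpha_*$ is admissible.

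\textbf{Main obstacle.} Little here is difficult; the one point needing care is Step 1's surjectivity of $\sigma$. It does not follow from $\alpha$ alone, and it is exactly what lets the conjugation formula determine $\alpha_*(f_g)$ on all of $G'/K'$. Surjectivity comes from the orthogonal-array bijections $\pi_{ij}$ together with the admissibility assumption on cosets.
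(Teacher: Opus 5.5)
Your proof is correct and follows the paper's argument: you show $\alpha(K)\subset K'$, deduce bijectivity of $gK\mapsto\alpha(g)K'$ from the commutative square with the bijections $\pi_{ij}$ and $\pi'_{ij}$, and then use that surjectivity to check $\alpha_*(f_g)=f_{\alpha(g)}$ on elements of the form $\alpha(y)K'$. The only difference is that you spell out what the paper leaves implicit: $\alpha_*$ is conjugation by the isotopy, and it is itself admissible because it carries $\Stab(H_i)$ onto $\Stab(H'_i)$, since $\sigma_i(H_i)=H'_i$.
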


\begin{proof}
Since $\alpha(H_i)\subset H'_i$, so $\alpha(K)=\alpha(H_i\cap H_j)\subset\alpha(H_i)\cap\alpha(H_j) \subset H'_i\cap H'_j= K'$.  Thus, we have a commutative diagram: $$\xymatrix{G/K\ar[r]^{\bar{\alpha}}\ar[d]^\simeq & G'/K'\ar[d]^\simeq\\
G/H_i\times G/H_j\ar[r]^{\bar{\alpha}\times \bar{\alpha}}_{\simeq} & G'/H'_i\times G'/H'_j}$$ so that $\bar{\alpha}\colon G/K\simeq G'/K'$ is an isotopy.

It remains to check that for every $g\in G$ and $x'K'\in G'/K'$ we have $\alpha_*(f_g)(x'K')=f_{\alpha(g)}(x'K')$.  Since $\bar{\alpha}\colon G/K\simeq G'/K'$, so we may assume that $x'=\alpha(x)$ for some $x\in G$.  We then have $$\alpha_*(f_g)(x'K')=\alpha(gx)K'=\alpha(g)x'K'=f_{\alpha(g)}(x'K').  $$
\end{proof}

\begin{corollary}\label{cor:eq}
The equivalence of group packets is an equivalence relation.
\end{corollary}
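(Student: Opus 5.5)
Reflexivity and symmetry are immediate from the definition, so the plan is to concentrate on transitivity. For reflexivity, take $(G'',H''_i)=(G,H_i)$ with $\alpha=\beta=\Id$; the identity is clearly admissible. Symmetry holds because the definition of equivalence is symmetric in the two packets: one simply swaps the roles of $\alpha$ and $\beta$.

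For transitivity, suppose we are given
\[(G,H_i)\stackrel{\alpha}{\to}(G'',H''_i)\stackrel{\beta}{\leftarrow}(G',H'_i)\]
and
\[(G',H'_i)\stackrel{\gamma}{\to}(G''',H'''_i)\stackrel{\delta}{\leftarrow}(G^{(4)},H^{(4)}_i).\]
Rather than constructing a pushout of groups, which would be unpleasant, I will use Lemma~\ref{lem:iff} to replace every target by a canonical automorphism packet. Write $A(G,H_i)=\Aut(G/K,G/H_i)$ with its packet structure from Remark~\ref{rem:admiss}. By Lemma~\ref{lem:iff}, $\alpha$ induces an isomorphism $\alpha_*\colon A(G,H_i)\simeq A(G'',H''_i)$ satisfying $f\circ\alpha=\alpha_*\circ f$, and similarly for $\beta,\gamma,\delta$. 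Then
\[\phi=\beta_*^{-1}\alpha_*\colon A(G,H_i)\to A(G',H'_i)\]
is an isomorphism with $\phi\circ f=f\circ$ (up to the zig-zag), and likewise $\psi=\delta_*^{-1}\gamma_*\colon A(G',H'_i)\to A(G^{(4)},H^{(4)}_i)$. I then propose the new zig-zag
\[(G,H_i)\xrightarrow{\ \psi\phi f\ }A(G^{(4)},H^{(4)}_i)\xleftarrow{\ f\ }(G^{(4)},H^{(4)}_i).\]
The right arrow is admissible by Remark~\ref{rem:admiss}. The left arrow is a composite of an admissible morphism with isomorphisms.

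The remaining work is to check that the isomorphisms $\alpha_*$ and its relatives are admissible, i.e.\ that they send stabilizer subgroups into stabilizer subgroups and induce bijections on cosets. Since $\alpha_*$ is conjugation by the isotopy $\bar\alpha$, it is a group isomorphism carrying $\Stab(H_i)$ to $\Stab(\alpha(e)H''_i)=\Stab(H''_i)$, because $\bar\alpha$ sends the base point $eK$ to $eK''$. It also induces bijections on cosets because the coset spaces are identified with $X_i$ and $\bar\alpha$ is bijective there. Composites of admissible morphisms are admissible, since the inclusions compose and the bijections on coset spaces compose.

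The main obstacle is this bookkeeping: confirming that $\alpha_*$ respects the chosen base points, so that it genuinely maps the stabilizer packet onto the stabilizer packet, and that inverses of admissible isomorphisms are admissible. For the latter, I need that a group isomorphism mapping $H_i$ into $H'_i$ with bijective induced coset maps satisfies $\alpha(H_i)=H'_i$, which follows by comparing indices.
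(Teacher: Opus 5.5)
Your argument is correct and is essentially the paper's proof. The paper uses diagram~\eqref{eq:f} of Lemma~\ref{lem:iff} to show that packets are equivalent exactly when their associated autotopy packets are isomorphic, so transitivity reduces to composing isomorphisms, which is your $\psi\phi$ together with the zig-zag $(G,H_i)\to A(G^{(4)},H^{(4)}_i)\leftarrow(G^{(4)},H^{(4)}_i)$ through $f$. Your extra checks (that $\alpha_*$, as conjugation by the base-point-preserving isotopy $\bar\alpha$, carries stabilizers onto stabilizers, and that composites and inverses of admissible isomorphisms are admissible) spell out what the paper calls ``clear,'' and the garbled relation ``$\phi\circ f=f\circ$'' is never used and can be deleted.
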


\begin{proof}
The relation is clearly reflexive and symmetric, it remains to show that it is transitive. But Lemma \ref{lem:iff}, more precisely, diagram \eqref{eq:f} shows that if two group packets are equivalent then their associated 
autotopy group packets are isomorphic.  The converse of that statement is clear by definition. The characterization of equivalence is immediately transitive.
\end{proof}

We are ready for the following:

\begin{theorem}\label{thm:m1}
There is a bijection between the set of isotopy classes of transitive elements of $\qmol$ and the set of equivalence classes of elements of $\qgp$. 
\end{theorem}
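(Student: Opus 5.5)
We will define maps in both directions using Lemmas~\ref{lem:arrtopack} and~\ref{lem:packtoarr}, identify transitive elements of $\qmol$ with transitive elements of $\qoa$ via the classical correspondence, and check that the maps are well defined on classes and mutually inverse. Let $\Phi$ send a group packet $(G,H_i)$ to the isotopy class of $G/K\subset\prod G/H_i$; this is a transitive array by Lemma~\ref{lem:packtoarr}. Let $\Psi$ send a transitive array $(S,X_i,\pi_i)$ to the class of the packet $(\Aut(S,X_i),\Stab(x_i))$ from Lemma~\ref{lem:arrtopack}.

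\textbf{Well-definedness.} For $\Phi$: if $(G,H_i)\to(G'',H''_i)\leftarrow(G',H'_i)$ are admissible morphisms, Lemma~\ref{lem:iff} gives isotopies $G/K\simeq G''/K''\simeq G'/K'$. Composing them gives an isotopy, so the classes agree.

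For $\Psi$, two choices must not matter: the base point $s$ and the representative of the isotopy class.
\begin{itemize}
\item Changing $s$ to $gs$ conjugates all stabilizers by $g$. Conjugation by $g$ is an isomorphism of packets, hence an admissible morphism, so the two packets are equivalent.
\item An isotopy $(\sigma,\sigma_i)\colon(S,X_i)\to(S',X_i')$ induces an isomorphism $\Aut(S,X_i)\to\Aut(S',X'_i)$ by conjugation, $\tau\mapsto\sigma\tau\sigma^{-1}$ componentwise. It carries $\Stab(x_i)$ onto $\Stab(\sigma_i x_i)$, taking $s'=\sigma(s)$ as base point. This is an isomorphism of packets.
\end{itemize}

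\textbf{$\Phi\circ\Psi$ is the identity.} Starting from a transitive array with $G=\Aut(S,X_i)$, diagram~\eqref{cdag} in the proof of Lemma~\ref{lem:arrtopack} is precisely an isotopy $(G/K,G/H_i)\to(S,X_i)$. The maps $gK\mapsto gs$ and $gH_i\mapsto gx_i$ are bijections commuting with the projections.

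\textbf{$\Psi\circ\Phi$ is the identity.} Starting from $(G,H_i)$, the array $G/K\subset\prod G/H_i$ is transitive. Take the base point $s=eK$, so that $x_i=eH_i$. Then $\Psi$ returns the packet $(\Aut(G/K,G/H_i),\Stab(eH_i))$. By Remark~\ref{rem:admiss}, $f\colon G\to\Aut(G/K,G/H_i)$ is an admissible morphism to this packet. Together with the identity morphism on the target, this exhibits an equivalence between $(G,H_i)$ and $\Psi\Phi(G,H_i)$.

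The main obstacle is verifying admissibility of $f$ in Remark~\ref{rem:admiss}, namely that the induced map $G/H_i\to A/\Stab_A(eH_i)$ is bijective, where $A=\Aut(G/K,G/H_i)$. Since $A$ acts on $G/H_i$ and $G$ already acts transitively there, $A$ does too. Hence $A/\Stab_A(eH_i)\cong G/H_i$ via the orbit map, and the composite $G/H_i\to A/\Stab_A(eH_i)\to G/H_i$ is the identity. Also $f(H_i)\subset\Stab_A(eH_i)$ because $f_h(eH_i)=hH_i=eH_i$ for $h\in H_i$. This gives bijectivity, and with it the theorem.
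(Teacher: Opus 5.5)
Your proposal is correct and follows the same route as the paper. It uses the two constructions of Lemmas~\ref{lem:arrtopack} and~\ref{lem:packtoarr}, diagram~\eqref{cdag} for array $\to$ packet $\to$ array, and the admissible morphism $f$ of Remark~\ref{rem:admiss} for packet $\to$ array $\to$ packet; you additionally make explicit what the paper leaves implicit, namely well-definedness on classes (base point, isotopy representative, Lemma~\ref{lem:iff}) and the bijectivity of $G/H_i\to A/\Stab_A(eH_i)$.
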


\begin{proof}

The constructions that go between transitive orthogonal arrays and group packets are contained in Lemmas \ref{lem:arrtopack}, \ref{lem:packtoarr}.

It is immediate that starting with a transitive orthogonal array, obtaining a group packet from it, and getting an orthogonal array from this group packet, we obtain an isotopic orthogonal array. This is simply diagram \eqref{cdag}. 

In the other direction, if we start with a group packet $(G,H_i)$, obtain an orthogonal array from it, and convert the array to a group packet (which is $(\Aut(G/K, G/H_i), \Stab_{\Aut(G/K, G/H_i)}H_i)$), then while the packet is not the same, there is an admissible morphism $f$ from the former to the latter, see Remark \ref{rem:admiss}. Thus, the two group packets are equivalent.

\end{proof}

We have a restricted version of the correspondence:

\begin{corollary}\label{cor:strans}
There is a bijection between the set of isotopy classes of simply transitive elements of $\qmol$ and the set of equivalence classes of disjoint elements of $\qgp$. 
\end{corollary}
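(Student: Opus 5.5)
The plan is to restrict the bijection of Theorem~\ref{thm:m1} to the two subclasses and check that each construction preserves the relevant property. Three things need checking: a simply transitive array gives a packet equivalent to a disjoint one; a disjoint packet gives a simply transitive array; and the restricted map is well defined and bijective on equivalence classes. Injectivity on classes comes for free from Theorem~\ref{thm:m1}, since the restricted sets are subsets of the original ones.

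\emph{From arrays to packets.} Let $(S,X_i,\pi_i)$ be simply transitive, and let $G\leqslant\Aut(S,X_i)$ act simply transitively on $S$. Fix $s\in S$ and put $x_i=\pi_i(s)$. I will not use the full autotopy group as in Lemma~\ref{lem:arrtopack}. Instead I rerun that proof with $G$ in place of $\Aut(S,X_i)$. Nothing there used maximality: only transitivity of $G$ on $S$ and $G$-equivariance of the $\pi_i$. So $(G,H_i)$ with $H_i=\Stab_G(x_i)$ and $K=\Stab_G(s)$ is a group packet. It is disjoint because $K=\Stab_G(s)=\{e\}$ by simple transitivity.

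The array built from $(G,H_i)$ by Lemma~\ref{lem:packtoarr} is isotopic to the original array, by diagram~\eqref{cdag}. Moreover, the inclusion $G\hookrightarrow\Aut(S,X_i)$ is an admissible morphism into the full autotopy packet of Lemma~\ref{lem:arrtopack}, because both packets induce the same bijections $G/H_i\simeq X_i$. Hence this disjoint packet lies in the same equivalence class as the one Theorem~\ref{thm:m1} attaches to the array. In particular the class attached to a simply transitive array contains a disjoint packet.

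\emph{From packets to arrays.} Let $(G,H_i)$ be disjoint, so $K=\{e\}$ and $S=G/K=G$. The homomorphism $f\colon G\to\Aut(G,G/H_i)$ of Lemma~\ref{lem:packtoarr} is injective, since its kernel $\bigcap_g gKg^{-1}$ is trivial. Its image $f(G)$ acts on $S=G$ by left multiplication, which is simply transitive. So the resulting array is simply transitive. Finally, Lemma~\ref{lem:iff} says that equivalent packets give isotopic arrays. Being simply transitive is isotopy invariant, because conjugating the subgroup by the isotopy preserves simple transitivity.

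\emph{Conclusion and main obstacle.} The main subtlety is what ``the set of equivalence classes of disjoint elements'' means: an equivalence class may contain both disjoint and non-disjoint packets, for instance the full autotopy packet is usually not disjoint. I will read it as the set of equivalence classes that contain at least one disjoint packet. With that reading, the two steps above show that the bijection of Theorem~\ref{thm:m1} sends simply transitive classes into these classes and is surjective onto them. Injectivity is inherited from Theorem~\ref{thm:m1}.

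The step needing the most care is the admissible morphism from the subgroup packet to the full autotopy packet. I must check that the stabilizer of $x_i$ in $G$ maps into the stabilizer of $x_i$ in $\Aut(S,X_i)$, which holds trivially. I must also check that the induced maps on cosets are bijections. This follows because both coset spaces are identified with $X_i$ compatibly, and so the morphism is admissible.
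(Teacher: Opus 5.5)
Your proposal is correct and follows the paper's proof. A disjoint packet has $K=\{e\}$, so $G$ embeds in $\Aut(G,G/H_i)$ and acts regularly on $S=G$; conversely, a regular subgroup $G\leqslant\Aut(S,X_i)$ yields the disjoint packet $(G,\Stab_G(x_i))$ by rerunning Lemma~\ref{lem:arrtopack}, with isotopy $g\mapsto gs$, $gH_i\mapsto gx_i$. Your extra checks are all valid and only make explicit what the paper leaves implicit: that the inclusion $G\hookrightarrow\Aut(S,X_i)$ is an admissible morphism linking this packet to the one used in Theorem~\ref{thm:m1}, that simple transitivity is isotopy invariant, and that ``classes of disjoint elements'' means classes containing a disjoint packet.
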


\begin{proof}
Given a disjoint group packet $(G,H_i)$, we note that it is reduced, and obtain from it  an array $G\subset \prod G/H_i$. Note that $G$ embeds into $\Aut(G,G/H_i)$ and obviously acts simply transitively on $G$.  Thus, an array produced from a disjoint group packet is simply transitive.

If we have a simply transitive array, i.e., there exists a $G\subset \Aut(S,X_i)$ such that $G$ acts simply transitively on $S$; choose $s\in S$ and set $x_i=\pi_i(s)$.  The surjections $\pi_i$ are $\Aut(S,X_i)$, thus, $G$-equivariant, so that if we let $H_i=\Stab_G(x_i)$, we obtain (as in the proof of Lemma \ref{lem:arrtopack}, but with $K=\Stab_G(s)=\{e\}$) a disjoint group packet $(G,H_i)$ whose associated orthogonal array is isotopic to $(S,X_i)$.  Recall that the isotopy is given by $g\mapsto gs$ and $gH_i\mapsto gx_i$.
\end{proof}

\begin{remark}\label{rem:corrprod}
Observe that both Theorem \ref{thm:m1} and Corollary \ref{cor:strans} respect the product of sets of MOLS on one side and the product of group packets introduced in Remark \ref{rem:prod} on the other side.
\end{remark}

In light of Theorem \ref{thm:m1}, to study transitive MOLS is to study group packets. Due to the lack of control over the size of $K$, it is hard to bound the search, even when focusing on a specific $n$.  Whereas by Corollary \ref{cor:strans}, to find all simply transitive MOLS of a fixed size $n$, one needs only to run a computer search through all the isomorphism classes of groups of size $n^2$.  Note that there is no reason to believe that non-isomorphic groups would produce non-isotopic MOLS, but this procedure will produce them all.

To produce a transitive element of $\qmol$ we need to fix a $k$ and search for group packets $(G,H_i)_{i=1}^{q+2}$ with $|G|=kn^2$;  naturally, $|K|=k$.

More precisely, once such a group packet $(G,H_i)$ is found the orthogonal array is $$G/K\subset \prod_{i=1}^{q+2}G/H_i.$$  
\begin{remark}\label{rem:construct}

To produce a set of MOLS some choices are required:

We choose enumerations of $G/H_i$, i.e., bijections $$\phi_i\colon \{0,1,\dotsc, n-1\}\to G/H_i$$ for every $i$.  Then for every $3\leq\alpha\leq q+2$ we have a Latin square $$L_{ij}^\alpha:=\phi^{-1}_\alpha\pi_\alpha\pi^{-1}_{12}(\phi_1(i),\phi_2(j)).$$  Note that the choice of $\phi_i$'s does not change the isotopy class of the Latin squares that appear in the set; the choice of ordering of the subgroups $H_i$, more precisely, the choice of which two go first, does.
\end{remark}

\subsection{Some simply transitive examples}
A Latin square of size $n$ is equivalent to an orthogonal array in $3\text{-OA}(n)$.

\begin{definition}
    A Latin square is said to be \emph{reduced} or in \emph{standard form} if the entries in its first row and column are in the same order.
\end{definition}

Observe that a Latin square based on a group is isotopic to that obtained from a Cayley table of a group $G$.  The latter is isotopic to the array obtained  from the disjoint group packet $$(G\times G,\{e\} \times G, G\times \{e\}, \Delta G)$$ where $\Delta G$ is the diagonal subgroup. More precisely, we have a commutative diagram with horizontal maps bijections: 
\[
\resizebox{\textwidth}{!}{$\xymatrix{
& G\times G\ar[rrr]^{(x,y)\mapsto (x,y^{-1})}\ar[rd]^{\pi_3}\ar[dd]^{\pi_2}\ar[lddd]^{\pi_1} & & & G\times G\ar[ld]^m\ar[dd]^{p_1}\ar[rddd]^{p_2} & \\
& & G\times G/\Delta\ar[r]^{\hspace{1cm}\overline{(x,y)}\mapsto xy^{-1}} & G & &\\
& G\times G/G\times \{e\}\ar[rrr]^{\overline{(x,y)}\mapsto y^{-1}} & & & G & \\
G\times G/\{e\}\times G\ar[rrrrr]^{\overline{(x,y)}\mapsto x} & & & & & G\\
}$}
\]

Thus, we have shown that:

\begin{lemma}
Any Latin square based on a group is simply transitive. 
\end{lemma}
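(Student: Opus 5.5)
The plan is to reduce the statement to Corollary~\ref{cor:strans}, using the facts that transitivity notions are isotopy invariants and that the Cayley table is realised by an explicit disjoint group packet. Let $L$ be based on a group $G$, so $L$ is isotopic to the Cayley table of $G$. First I will note that simple transitivity is preserved under isotopy. If $(\sigma,\sigma_i)\colon (S,X_i,\pi_i)\to(S',X_i',\pi_i')$ is an isotopy, then conjugation $\tau\mapsto (\sigma,\sigma_i)\circ\tau\circ(\sigma,\sigma_i)^{-1}$ gives an isomorphism $\Aut(S,X_i)\cong\Aut(S',X'_i)$. Because $\sigma$ intertwines the actions on $S$ and $S'$, any subgroup acting simply transitively on $S$ is carried to one acting simply transitively on $S'$. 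Hence it suffices to treat the Cayley table itself, viewed as an element of $3\text{-OA}(|G|)$.

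Next I will verify that $(G\times G,\{e\}\times G,\, G\times\{e\},\,\Delta G)$ is a disjoint group packet in $3\text{-GP}(|G|)$. The pairwise intersections are trivial:
\begin{itemize}
\item $(\{e\}\times G)\cap(G\times\{e\})=\{(e,e)\}$;
\item $(\{e\}\times G)\cap\Delta G=\{(e,e)\}$, since $(x,x)$ with $x=e$ forces $(e,e)$;
\item $(G\times\{e\})\cap\Delta G=\{(e,e)\}$ by the same argument.
\end{itemize}
So $K=\{e\}$. Each subgroup has order $|G|$, so $[G\times G:H_i]=|G|=[H_i:K]$, as Definition~\ref{def:group-packet} requires. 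By Corollary~\ref{cor:strans}, or directly by the first paragraph of its proof, the associated array $G\times G\subset\prod_i (G\times G)/H_i$ is simply transitive: $G\times G$ acts on itself by left multiplication, simply transitively, and compatibly with the projections.

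The remaining step, and the only real content, is to check that this array is isotopic to the Cayley table array $\{(x,y,xy)\}\subset G\times G\times G$. This is the commutative diagram displayed before the statement. Take $\sigma(x,y)=(x,y^{-1})$ on $S=G\times G$, together with the identifications
\[
\overline{(x,y)}\mapsto x,\qquad \overline{(x,y)}\mapsto y^{-1},\qquad \overline{(x,y)}\mapsto xy^{-1}
\]
on the three coset spaces. I will check that each of these is well defined and bijective:
\begin{itemize}
\item $(x,y)(e,h)=(x,yh)$ has the same first coordinate $x$;
\item $(x,y)(g,e)=(xg,y)$ has the same $y^{-1}$;
\item $(x,y)(g,g)=(xg,yg)$ gives $xg(yg)^{-1}=xy^{-1}$.
\end{itemize}
Commutativity is then immediate: $(x,y)\mapsto(x,y^{-1})$ lands on the triple $(x,\,y^{-1},\,x\cdot y^{-1})$, which is a row of the Cayley table.

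I expect the main (mild) obstacle to be keeping the ordering and sign conventions straight: which coordinate receives the inverse, and that left cosets of $\Delta G$ are detected by $xy^{-1}$ rather than $x^{-1}y$. Once that bookkeeping is consistent, combining it with the isotopy invariance from the first step gives the lemma.
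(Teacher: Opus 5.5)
Your proposal is correct and follows the paper's own argument. The paper also exhibits the disjoint packet $(G\times G,\{e\}\times G,G\times\{e\},\Delta G)$, identifies its array with the Cayley table via the same maps $(x,y)\mapsto(x,y^{-1})$, $\overline{(x,y)}\mapsto x$, $y^{-1}$, $xy^{-1}$, and concludes via Corollary~\ref{cor:strans}; you additionally spell out the isotopy invariance of simple transitivity and the well-definedness checks, which the paper leaves implicit.
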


Furthermore, the maximal size set of $MOLS$ for a prime power $q=p^a$, i.e., an element in $(q-1)\text{-MOLS}(q)$ is obtained from the group packet \begin{equation}\label{eq:maxmol}(\mathbb{F}_{q}\oplus\mathbb{F}_{q}, \ell_i)_{i=1}^{q+1}\end{equation} where $\mathbb{F}_{q}$ is the finite field with $q$ elements and $\ell_i$ are the $1$-dimensional subspaces of the $2$-dimensional $\mathbb{F}_{q}$-vector space.  Thus, these too are simply transitive. 
\begin{remark}
Note that the autotopy groups for the two examples above are $(G\times G)\rtimes \Aut(G)$ and $(\mathbb{F}_{q}\oplus\mathbb{F}_{q})\rtimes \mathbb{F}^\times_{q}$ respectively, see Theorem \ref{thm:autotopy-group-single-square} and Corollary \ref{cor:mus} below. Thus, in both cases the autotopy group of the array is larger than what is needed to produce the array.  
\end{remark}

It is possible to produce a simply transitive Latin square, that does not arise as a Cayley table, see Table \ref{tbl:transitivity-classification} and Examples~\ref{example:simply-transitive-not-group} and~\ref{exa:order-ten}.

\section{MacNeish’s conjecture}\label{sec:macneish-conjecture}
Recall that $N(n)$ is the maximum size of a set of $n\times n$ MOLS. 

\begin{conjecture}[MacNeish’s conjecture~\cite{macneish1922euler}]\label{conj:macneish}
Let $n=\prod p_j^{a_j}$ be a distinct prime factorization of $n$ and set $p^a=\min\{p_i^{a_i}\}$. Then $N(n)=p^a-1$.
\end{conjecture}

\begin{remark}\label{rem:suff}
In order to prove MacNeish’s conjecture, it is sufficient to prove the following: if $\qmol$ is not empty, then for any prime $p$ such that $p^a\mid n$ but $p^{a+1}\nmid n$ we have $$q\leq p^a-1.$$ This follows from the product construction on sets of MOLS and the construction with~\eqref{eq:maxmol}, which demonstrates that $N(n)\geq p^a-1$, if $p^a=\min\{p_i^{a_i}\}$.
\end{remark}

We need the following preparatory Lemma:

\begin{lemma}\label{lem:conj} 
Let $(G,H_i)$ be a group packet then for any $i\neq j$ and $x,y\in G$ there is a $g\in G$ such that $$xH_ix^{-1}\cap yH_jy^{-1}=gKg^{-1}.$$
\end{lemma}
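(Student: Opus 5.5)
The plan is to use the orthogonal-array picture from Lemma~\ref{lem:packtoarr}, where intersections of conjugates of the $H_i$ become stabilizers of points. Set $S=G/K$ and $X_i=G/H_i$. By Lemma~\ref{lem:packtoarr}, $\pi_{ij}\colon G/K\to G/H_i\times G/H_j$, $gK\mapsto(gH_i,gH_j)$, is a bijection, and it is $G$-equivariant for left multiplication. Two standard facts will be used: the stabilizer of the coset $xH_i\in G/H_i$ is $xH_ix^{-1}$, and the stabilizer of $gK\in G/K$ is $gKg^{-1}$.

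First, the intersection $xH_ix^{-1}\cap yH_jy^{-1}$ is exactly the stabilizer in $G$ of the pair $(xH_i,yH_j)\in G/H_i\times G/H_j$ under the diagonal action. Second, since $\pi_{ij}$ is surjective, there is some $g\in G$ with $(gH_i,gH_j)=(xH_i,yH_j)$, that is, $gK=\pi_{ij}^{-1}(xH_i,yH_j)$.

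Because $\pi_{ij}$ is a $G$-equivariant bijection, an element stabilizes $(xH_i,yH_j)$ if and only if it stabilizes $gK$. Hence
\[
xH_ix^{-1}\cap yH_jy^{-1}=\Stab_G(xH_i,yH_j)=\Stab_G(gK)=gKg^{-1}.
\]
Concretely, $g$ satisfies $gH_i=xH_i$ and $gH_j=yH_j$, which gives $xH_ix^{-1}=gH_ig^{-1}$ and $yH_jy^{-1}=gH_jg^{-1}$. Their intersection is then $g(H_i\cap H_j)g^{-1}=gKg^{-1}$.

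No step is genuinely hard. The only substantive input is the surjectivity of $\pi_{ij}$, which comes from the index count $|G/K|=n^2=|G/H_i\times G/H_j|$ in Lemma~\ref{lem:packtoarr}. This count is what lets a single $g$ simultaneously represent both cosets $xH_i$ and $yH_j$.
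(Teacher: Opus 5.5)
Your proof is correct and is the paper's argument: use the bijection $G/K\to G/H_i\times G/H_j$ to pick $g$ with $(gH_i,gH_j)=(xH_i,yH_j)$, then identify both sides as the stabilizer of that pair. Your direct computation $xH_ix^{-1}\cap yH_jy^{-1}=g(H_i\cap H_j)g^{-1}$ makes explicit that only the surjectivity of $\pi_{ij}$ is needed.
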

\begin{proof}
Recall that the map $G/K\to G/H_i\times G/H_j$ that sends $gK$ to $(gH_i,gH_j)$ is a bijection. Thus, there is a $g\in G$ such that $(gH_i,gH_j)=(xH_i,yH_j)$; the result follows since 
\[
\Stab_G(gH_i,gH_j)=gKg^{-1},
\]
while 
\[
\Stab_G(xH_i,yH_j)=xH_ix^{-1}\cap yH_jy^{-1}.
\]
\end{proof}

Both theorems below are reductions (via Sylow theorems---see Sylow’s original paper~\cite{Sylow1872} and modern expositions~\cite{DummitFoote2004,Rotman2015}) to a known case (which is, anyhow, easy to show in our setting directly, see Remark \ref{rem:dir}).

\begin{theorem}\label{thm:m2}
Consider a transitive element of $\qmol$ and let $(G,H_i)_{i=1}^{q+2}$ be a group packet in the equivalence class that corresponds to this element.  (Note that $G$ need not be the autotopy group of the element.)  For a prime $p$ suppose that $p^a\mid n$ and $p^{a+1}\nmid n$, if $$p\nmid |K|,$$ then $q\leq p^a-1$.
\end{theorem}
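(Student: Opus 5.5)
The plan is to reduce to a counting argument inside a Sylow $p$-subgroup of $G$, using Lemma~\ref{lem:conj} to turn the hypothesis $p\nmid|K|$ into trivial pairwise intersections. By Remark~\ref{rem:suff} it suffices to show $q+2\le p^a+1$.

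\textbf{Orders.} Set $k=|K|$. From $[G:H_i]=[H_i:K]=n$ we get $|H_i|=kn$ and $|G|=kn^2$. Since $p\nmid k$ and $p^a\,\|\, n$, the $p$-part of $|H_i|$ is $p^a$ and the $p$-part of $|G|$ is $p^{2a}$.

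\textbf{Choosing conjugates inside one Sylow subgroup.} Fix a Sylow $p$-subgroup $P\leqslant G$, so $|P|=p^{2a}$. For each $i$, let $S_i$ be a Sylow $p$-subgroup of $H_i$, of order $p^a$. By Sylow's theorems $S_i$ lies in some Sylow $p$-subgroup of $G$, and that subgroup is conjugate to $P$. Hence there is $x_i\in G$ with
\[
Q_i:=x_iS_ix_i^{-1}\leqslant P\cap x_iH_ix_i^{-1},\qquad |Q_i|=p^a .
\]

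\textbf{Trivial pairwise intersections.} For $i\neq j$, Lemma~\ref{lem:conj} gives some $g\in G$ with
\[
Q_i\cap Q_j\subseteq x_iH_ix_i^{-1}\cap x_jH_jx_j^{-1}=gKg^{-1}.
\]
Thus $Q_i\cap Q_j$ is a $p$-group whose order divides $|K|$. Since $p\nmid|K|$, this forces $Q_i\cap Q_j=\{e\}$. This is the only place the hypothesis on $K$ is used, and it is the key step; once Lemma~\ref{lem:conj} is applied to the conjugated subgroups it is immediate.

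\textbf{Counting.} The $q+2$ subgroups $Q_i$ of $P$ each have order $p^a$ and meet pairwise only in $e$. Their non-identity elements are therefore disjoint, so
\[
(q+2)(p^a-1)+1\le |P|=p^{2a}.
\]
This gives $q+2\le p^a+1$, i.e.\ $q\le p^a-1$, as required.

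The only delicate point is making sure every $Q_i$ sits inside the same $P$ while still having full order $p^a$. Conjugating each Sylow subgroup of $H_i$ into $P$ handles this, and Lemma~\ref{lem:conj} is exactly what allows arbitrary conjugates $x_iH_ix_i^{-1}$, $x_jH_jx_j^{-1}$ rather than the $H_i$ themselves.
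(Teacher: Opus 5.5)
Your proof is correct and follows the paper's argument. Like the paper, you conjugate Sylow $p$-subgroups of the $H_i$ into a single Sylow $P$ of order $p^{2a}$, use Lemma~\ref{lem:conj} to force trivial pairwise intersections, and finish with the count $1+(q+2)(p^a-1)\le p^{2a}$; the paper gives this count in Remark~\ref{rem:dir} as an alternative to its main ending via the packet $(P,P_i)$, Theorem~\ref{thm:m1} and $N(n)\le n-1$. Two minor quibbles: citing Remark~\ref{rem:suff} is unnecessary, since $q+2\le p^a+1\iff q\le p^a-1$ is just arithmetic, and dividing by $p^a-1$ tacitly uses $a\ge 1$, which the statement and the paper also assume implicitly.
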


\begin{proof}
Since $p^a\mid n$ and $p^{a+1}\nmid n$, while $p\nmid |K|$, so $G$ has a Sylow $p$-subgroup $P$ with $|P|=p^{2a}$.  Similarly, each $H_i$ has a Sylow $p$-subgroup $P_i'$ with $|P_i'|=p^{a}$.  Let $P_i$ denote a conjugate of $P_i'$ such that $P_i\subset P$.  Note that $P_i\cap P_j$ is in a conjugate of $K$ by Lemma \ref{lem:conj}.  Since $p\nmid |K|$, so $P_i\cap P_j=\{e\}$, thus, $(P,P_i)_{i=1}^{q+2}$ is a group packet with index $p^a$ (and trivial $K$).  We know that by Theorem \ref{thm:m1} this would produce an element in $q\text{-MOLS}(p^a)$, which by a well known result $N(n) \leq n-1$~\cite{MR22821,evans2018orthogonal} implies that $q\leq p^a-1$. 
\end{proof}

\begin{remark}\label{rem:dir}
It is unnecessary, but more conceptual, to use $N(n) \leq n-1$~\cite{MR22821,evans2018orthogonal} in the proof of Theorem \ref{thm:m2}.  We can simply do a counting argument which adds up the elements in the union of $P_i$'s whose number is bounded by those in $P$: $1+(q+2)(p^a-1)\leq p^{2a}$ so $$(q+2)(p^a-1)\leq p^{2a}-1=(p^a+1)(p^a-1)$$ so $q+2\leq p^a+1$ so  $q\leq p^a-1$.  A similar argument, namely $p^s+(q+2)(p^{a+s}-p^s)\leq p^{2a+s}$, where $p^s=|P_K|$ (see below), can be used  in the proof of Theorem \ref{thm:m3} below.
\end{remark}

An immediate consequence of Theorem \ref{thm:m2} is that if $n=\prod p_j^{a_j}$ is a distinct prime factorization of $n$ and $p^a=\min\{p_i^{a_i}\}$, then in order to find a counterexample to Conjecture~\ref{conj:macneish} in the transitive setting one need not look among group packets $(G,H_i)$ with $|G|=kn^2$ where $p\nmid k$.  In particular, $k=1$ will never produce a counterexample:

\begin{theorem}\label{thm:stransconj}
MacNeish’s conjecture holds  for simply transitive sets of MOLS.
\end{theorem}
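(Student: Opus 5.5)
The plan is to reduce the statement to Theorem~\ref{thm:m2} via Corollary~\ref{cor:strans}, using Remark~\ref{rem:suff} as the target. By Remark~\ref{rem:suff}, it suffices to show the following. Given a simply transitive element of $\qmol$, and any prime $p$ with $p^a\mid n$ but $p^{a+1}\nmid n$, we have $q\leq p^a-1$. Applying this to the prime achieving $\min\{p_i^{a_i}\}$ gives $N(n)\le p^a-1$ within the simply transitive class. The lower bound $p^a-1$ comes from the product construction together with~\eqref{eq:maxmol}, and that construction is itself simply transitive: the packets~\eqref{eq:maxmol} are disjoint, and disjointness is preserved under products by Remark~\ref{rem:prod} and Remark~\ref{rem:corrprod}. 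So the bound is attained inside the class.

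First, given a simply transitive set of MOLS, I would invoke Corollary~\ref{cor:strans} to obtain a \emph{disjoint} group packet $(G,H_i)_{i=1}^{q+2}$ in its equivalence class. Here $K=\{e\}$ and $|G|=n^2$. This packet is also a group packet corresponding to the transitive element, in the sense of Theorem~\ref{thm:m1}, since a simply transitive array is transitive. Second, since $|K|=1$, the hypothesis $p\nmid |K|$ of Theorem~\ref{thm:m2} holds for every prime $p$. Theorem~\ref{thm:m2} then yields $q\le p^a-1$ for every such $p$, and Remark~\ref{rem:suff} finishes the argument.

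There is one point to check: Theorem~\ref{thm:m2} is stated for ``a group packet in the equivalence class'', and the packet from Corollary~\ref{cor:strans} is indeed one. Its associated array is isotopic to the given one, so it lies in the class corresponding under Theorem~\ref{thm:m1}. If one wanted to avoid this bookkeeping, the argument could be made self-contained. Take a Sylow $p$-subgroup $P$ of $G$, of order $p^{2a}$, and conjugates $P_i\subset P$ of Sylow $p$-subgroups of the $H_i$, each of order $p^a$. Lemma~\ref{lem:conj} forces $P_i\cap P_j=\{e\}$. Counting elements of the union as in Remark~\ref{rem:dir} then gives $1+(q+2)(p^a-1)\le p^{2a}$, hence $q\le p^a-1$.

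I expect no real obstacle. The only subtle step is confirming that the group produced by Corollary~\ref{cor:strans} has trivial $K$, which holds because the action is simply transitive, so $\Stab_G(s)=\{e\}$.
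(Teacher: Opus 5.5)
Your proposal is correct and matches the paper's proof: the lower bound comes from products of the simply transitive packets~\eqref{eq:maxmol}, and the upper bound comes from Corollary~\ref{cor:strans}, which yields a disjoint packet so that $p\nmid|K|=1$ and Theorem~\ref{thm:m2} applies. Your optional self-contained Sylow-counting argument is the same as the one given in Remark~\ref{rem:dir}.
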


\begin{proof}
Observe that the lower bound on $N(n)$ still holds for simply transitive sets of MOLS since a product of simply transitive sets of MOLS is still simply transitive and furthermore, the construction with~\eqref{eq:maxmol} produces a simply transitive set of MOLS.

For the upper bound on $N(n)$, we note that a simply transitive element of $\qmol$ corresponds via Corollary \ref{cor:strans} to a disjoint group packet, i.e., an element of $\qgp$ with a trivial $K$.  Since $p\nmid |K|$, the result follows by Theorem \ref{thm:m2}. 
\end{proof}

The following theorem treats another special case in which Conjecture~\ref{conj:macneish} holds.

\begin{theorem}\label{thm:m3}
Consider a transitive element of $\qmol$ and let $(G,H_i)_{i=1}^{q+2}$ be a group packet in the equivalence class that corresponds to this element.    For a prime $p$ suppose that $p^a\mid n$ and $p^{a+1}\nmid n$, if $P$, a Sylow $p$-subgroup of $G$, is unique, i.e., $$P\trianglelefteqslant G,$$ then $q\leq p^a-1$.
\end{theorem}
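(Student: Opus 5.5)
Since $P\trianglelefteqslant G$ is normal, it is the unique Sylow $p$-subgroup. For any subgroup $L\leqslant G$ the intersection $P\cap L$ is a normal $p$-subgroup of $L$ containing every $p$-subgroup of $L$, so $P\cap L$ is the unique Sylow $p$-subgroup of $L$. Apply this to $L=H_i$ and to $L=K$, and set
\[
P_i = P\cap H_i,\qquad P_K = P\cap K .
\]
Write $|K|=k$ and $p^s$ for the $p$-part of $k$. Since $|G|=kn^2$ and $|H_i|=kn$, we get $|P|=p^{2a+s}$, $|P_i|=p^{a+s}$ and $|P_K|=p^s$. The point of normality is that no conjugation is needed to put the $P_i$ inside a common Sylow subgroup. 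Moreover
\[
P_i\cap P_j = P\cap H_i\cap H_j = P\cap K = P_K
\]
exactly, for all $i\neq j$.

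Next check that $(P,P_i)_{i=1}^{q+2}$ is a group packet with parameter $p^a$. The intersections are the common subgroup $P_K$, as just shown. The indices are
\[
[P:P_i]=p^{2a+s}/p^{a+s}=p^a,\qquad [P_i:P_K]=p^{a+s}/p^s=p^a .
\]
So $(P,P_i)$ is an element of $(q+2)\text{-GP}(p^a)$.

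By Lemma~\ref{lem:packtoarr}, or Theorem~\ref{thm:m1}, this packet produces a (transitive) element of $q\text{-MOLS}(p^a)$. The bound $N(p^a)\le p^a-1$ then gives $q\le p^a-1$.

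Alternatively, following Remark~\ref{rem:dir}, count directly inside $P$. The sets $P_i\setminus P_K$ are pairwise disjoint, so
\[
p^s+(q+2)(p^{a+s}-p^s)\le p^{2a+s}.
\]
Dividing by $p^s$ gives $(q+2)(p^a-1)\le p^{2a}-1$, hence $q+2\le p^a+1$.

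The only step needing care is identifying $P\cap H_i$ and $P\cap K$ as full Sylow subgroups of the right orders. This follows from normality, since $P\cap L$ is a normal $p$-subgroup of $L$ and therefore lies in, and contains, every Sylow $p$-subgroup of $L$. The orders $|P_i|=p^{a+s}$ and $|P_K|=p^s$ are then read off from $|H_i|=kn$ and $|K|=k$, using $p^a\parallel n$.
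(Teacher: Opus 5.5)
Your proposal is correct and is essentially the paper's proof. In both, normality of $P$ makes $P_i=P\cap H_i$ and $P_K=P\cap K$ the Sylow $p$-subgroups of $H_i$ and $K$ with $P_i\cap P_j=P_K$, so $(P,P_i)$ is an element of $(q+2)\text{-GP}(p^a)$; then either Theorem~\ref{thm:m1} with $N(p^a)\le p^a-1$, or the direct count from Remark~\ref{rem:dir}, gives $q\le p^a-1$. You also spell out the orders $|P|=p^{2a+s}$, $|P_i|=p^{a+s}$, $|P_K|=p^s$ that the paper leaves implicit, and, like the paper, you tacitly need $a\ge 1$ (i.e.\ $p\mid n$) for the final inequality to be meaningful.
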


\begin{proof}
Since $P$ is the unique Sylow $p$-subgroup of $G$, we have that for any $T\leqslant G$ the intersection $T\cap P$ is the unique Sylow $p$-subgroup of $T$.  Thus, $P_i=H_i\cap P$ and $P_K=K\cap P$ results in $P_i\leqslant P$ with $P_i\cap P_j=P_K$ for all $i\neq j$ and so $(P,P_i)_{i=1}^{q+2}$ is a group packet with index $p^a$ (and a non-trivial $K$, namely $P_K$).  This is, again, impossible unless $q\leq p^a-1$, by Theorem \ref{thm:m1} and $N(n)\leq n-1$~\cite{MR22821,evans2018orthogonal}.
\end{proof}

The   proof of Theorem \ref{thm:m2}  in general only produces $(P,P_i)$ such that $|P|=p^s(p^a)^2$, $[P:P_i]=p^a$, and $|P_i\cap P_j|\leq p^s$ for $i\neq j$.  Thus, see Remark \ref{rem:OAk}, we obtain an element of $\text{OA}(p^s(p^a)^2,q+2,p^a,2)$, i.e., an array of index $p^s$. This is insufficient to produce an effective bound on $q$.

\section{Latin square methods}\label{sec:latin-square-methods}

This section collects the main tools and procedures we use to analyse Latin squares and sets of MOLS arising from our construction. 
We begin by describing the autotopy group in the setting, starting from the Cayley table case and extending to sets of MOLS defined by orthomorphisms. 
We then present representative examples that illustrate the transitivity phenomena that occur in our output. 
We also describe our computational pipeline in SageMath with calls to GAP for enumerating subgroup triples, constructing and normalizing Latin squares, testing associativity, and classifying non-associative squares into main classes via canonical graph certificates. 
The section concludes with summary tables of autotopy group computations.

\subsection{Autotopy groups of MOLS based on a group}

\begin{theorem}[Kotlar\cite{kotlar2014computing}]\label{thm:autotopy-group-single-square}
Let $G$ be a finite group and let $L$ be its Cayley table, considered as a Latin square, i.e., 
\[
L(x, y) = xy
\]
for all $x,y\in G$. Then the autotopy group $\Aut(L)$ of $L$ is isomorphic to the semi-direct product 
\[
(G \times G) \rtimes \Aut(G),
\]
where $\Aut(G)$ acts diagonally on $G \times G$.
\end{theorem}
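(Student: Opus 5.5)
We realise the autotopy group explicitly inside $\Sigma_G^3$, identifying the Latin square with the orthogonal array $S=\{(x,y,xy)\}\subset G\times G\times G$ as in Section~\ref{sec:mols-and-oa}. An autotopy is then a triple $(\alpha,\beta,\gamma)$ of bijections of $G$ with $\gamma(xy)=\alpha(x)\beta(y)$ for all $x,y\in G$. We will exhibit a subgroup isomorphic to $(G\times G)\rtimes\Aut(G)$ and then show that every autotopy lies in it.

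\emph{Step 1: the candidate subgroup.} For $(a,b)\in G\times G$ put $\tau_{(a,b)}=(x\mapsto ax,\ y\mapsto yb^{-1},\ z\mapsto azb^{-1})$. This is an autotopy, since $a(xy)b^{-1}=(ax)(yb^{-1})$. For $\varphi\in\Aut(G)$ put $\rho_\varphi=(\varphi,\varphi,\varphi)$, which is an autotopy because $\varphi$ is a homomorphism. A direct computation gives $\rho_\varphi\tau_{(a,b)}\rho_\varphi^{-1}=\tau_{(\varphi(a),\varphi(b))}$, so $\Aut(G)$ normalises $T=\{\tau_{(a,b)}\}\cong G\times G$ and acts on it diagonally. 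The intersection $T\cap\{\rho_\varphi\}$ is trivial: if $\tau_{(a,b)}=\rho_\varphi$, then evaluating the first two components at $e$ forces $a=b=e$, so $\varphi=\Id$. Hence $T\rtimes\Aut(G)$ embeds in $\Aut(L)$.

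\emph{Step 2: every autotopy lies in this subgroup.} Let $(\alpha,\beta,\gamma)$ be an autotopy and set $a=\alpha(e)$, $b=\beta(e)^{-1}$. Composing with $\tau_{(a,b)}^{-1}$, we may assume $\alpha(e)=\beta(e)=e$. Substituting $y=e$ into $\gamma(xy)=\alpha(x)\beta(y)$ gives $\gamma(x)=\alpha(x)$, and substituting $x=e$ gives $\gamma(y)=\beta(y)$. Hence $\alpha=\beta=\gamma$, and the identity $\gamma(xy)=\gamma(x)\gamma(y)$ shows that $\gamma\in\Aut(G)$. So every autotopy equals $\tau_{(a,b)}\rho_\varphi$, and this decomposition is unique by the trivial intersection established in Step 1.

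\emph{Remaining check.} The key step is the normalisation in Step 2, which reduces an arbitrary autotopy to the case $\alpha(e)=\beta(e)=e$; in a group, as opposed to a general quasigroup, this case forces a homomorphism. What remains is routine bookkeeping of conventions. Left multiplication on the first coordinate and right multiplication by the inverse on the second coordinate do give a left action of $G\times G$, and the conjugation formula shows that the action of $\Aut(G)$ is indeed diagonal, $\varphi\cdot(a,b)=(\varphi(a),\varphi(b))$, matching the statement. This also agrees with the simply transitive packet $(G\times G,\{e\}\times G,G\times\{e\},\Delta G)$: $T$ is the subgroup acting regularly on $S$, and $\Aut(G)$ is the stabiliser of the point $(e,e,e)$.
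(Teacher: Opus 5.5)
Your proof is correct and follows essentially the same route as the paper. The same normalization by $a=\alpha(e)$, $b=\beta(e)^{-1}$ extracts an automorphism and gives the same parametrization $\tau_{(a,b)}\rho_\varphi=(a\varphi,\varphi b^{-1},a\varphi b^{-1})$ of all autotopies; the only difference is that you recognize the semidirect product internally (conjugation formula plus trivial intersection), whereas the paper directly checks that $\theta(a,b,\varphi)=(a\varphi,\varphi b^{-1},a\varphi b^{-1})$ is a homomorphism from the external semidirect product.
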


\begin{proof}
Let $(\alpha, \beta, \gamma)$ be an autotopy of $L$. That is, $\alpha, \beta, \gamma$ are bijections from $G$ to $G$ such that
\begin{equation}\label{eqn:gamma}
    \gamma(xy) = \alpha(x)\beta(y) \quad \text{for all } x, y \in G.
\end{equation}
 From~\eqref{eqn:gamma}, we get
\begin{align*}
    \alpha(x) &= \gamma(x)\beta(e)^{-1} \quad \text{for all } x\in G,\\
    \beta(y) &= \alpha(e)^{-1}\gamma(y) \quad \text{for all } y\in G.
\end{align*}
Define a map
\[
\varphi \colon  G \to G, \quad \varphi(z) = \alpha(e)^{-1}\gamma(z)\beta(e)^{-1} \quad \text{for all } z \in G.
\]
Then $\varphi$ is a bijection. Moreover, for all $x,y \in G$,
\begin{align*}
    \varphi(xy) &= \alpha(e)^{-1}\gamma(xy)\beta(e)^{-1} \\
    &= \alpha(e)^{-1}\alpha(x)\beta(y)\beta(e)^{-1} \\
    &= \alpha(e)^{-1}\gamma(x)\beta(e)^{-1} \alpha(e)^{-1}\gamma(y)\beta(e)^{-1} \\
    &= \varphi(x)\varphi(y).
\end{align*}
Hence, $\varphi \in \Aut(G)$. Let $a = \alpha(e)$ and $b = \beta(e)^{-1}$. Then
\[
(\alpha, \beta, \gamma) = (a\varphi,\, \varphi b^{-1},\, a\varphi b^{-1}).
\]

Conversely, for any $a, b \in G$, and $\varphi \in \Aut(G)$, define
\[
\alpha(x) := a\varphi(x), \quad
\beta(y) := \varphi(y) b^{-1}, \quad
\gamma(z) := a\varphi(z)b^{-1}.
\]
Then $\alpha, \beta, \gamma$ are bijections and satisfy
\[
\alpha(x)\beta(y) = a\varphi(x)\varphi(y)b^{-1} = a\varphi(xy)b^{-1} = \gamma(xy), \quad \forall x,y\in G,
\]
so $(\alpha, \beta, \gamma) \in \Gamma(L)$. Hence, every autotopy of $L$ arises uniquely this way.

Define a map
\begin{equation*}
\label{eqn:semidirect-product-map}
\begin{aligned}
    \theta &\colon  (G \times G) \rtimes \Aut(G) \to \Gamma(L) \\
    \theta(a, b, \varphi) &:= (a\varphi,\; \varphi b^{-1},\; a\varphi b^{-1}).
\end{aligned}
\end{equation*}

The group operation in the semi-direct product is given by
\begin{align*}
    (a, b, \varphi) \cdot (a', b', \varphi') 
    &= \left((a, b) \varphi(a', b'),\; \varphi \circ \varphi'\right) \\
    &= \left(a  \varphi(a'),\; b  \varphi(b'),\; \varphi \circ \varphi'\right),
\end{align*}
which is mapped to
\begin{align*}
    \theta\left(a \varphi(a'),\; b \varphi(b'),\; \varphi \circ \varphi'\right) & = \left(a \varphi(a') \varphi \circ \varphi',\; \varphi \circ \varphi' (b\varphi(b'))^{-1},\; a \varphi(a') \varphi \circ \varphi' (b\varphi(b'))^{-1}\right).
\end{align*}
Compute
\begin{align*}
    \theta(a, b, \varphi) \theta(a', b', \varphi') 
    &= \left(a\varphi,\; \varphi b^{-1},\; a\varphi b^{-1}\right) \circ \left(a'\varphi',\; \varphi' b'^{-1},\; a'\varphi' b'^{-1}\right) \\
    &= \left(a\varphi (a'\varphi'),\; \varphi ( \varphi' b'^{-1})b^{-1},\; a\varphi( a'\varphi' b'^{-1})b^{-1}\right) \\
    &= \left(a\varphi (a') \varphi(\varphi'),\; \varphi (\varphi') \varphi (b'^{-1})b^{-1},\; a\varphi( a')\varphi (\varphi')\varphi ( b'^{-1})b^{-1}\right) \\
    &= \left(a\varphi (a') \varphi(\varphi'),\; \varphi (\varphi') (b\varphi (b'))^{-1},\; a\varphi( a')\varphi (\varphi')(b\varphi ( b'))^{-1}\right)
\end{align*}
to show that $\theta$ is a group homomorphism.

\end{proof}

\begin{definition}[Johnson, Dulmage, and Mendelsohn~\cite{johnson1961orthomorphisms}]
 Let $G$ be a group. Suppose that a set of MOLS corresponds to an array of the form $$(x,y, xy, x\mu_1(y),\dotsc,\mu_t(y))_{x,y\in G}\subset G^{t+3}$$ where  $xy$ denotes the group product and $\mu_i\colon G\to G$ are bijections. Such a set of MOLS is said to be \emph{based on a group}, more particularly $G$-based.  The $\mu_i$ are called orthomorphisms.
\end{definition}

\begin{lemma}\label{lem:mu-on-identity}
Any set of MOLS based on a group is isotopic to one with $$\mu_i(e)=e$$ for all $i$.  
\end{lemma}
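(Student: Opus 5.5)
The plan is to normalize each orthomorphism by a right translation in the symbol set of the corresponding Latin square. Isotopies of orthogonal arrays allow an independent bijection $\sigma_\alpha$ on each coordinate set $X_\alpha$, so such a relabeling is harmless.

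Start from a $G$-based set of MOLS, given by the array
\[
S=\{(x,\,y,\,xy,\,x\mu_1(y),\dotsc,x\mu_t(y)) : x,y\in G\}\subset G^{t+3}.
\]
I read the last coordinate as $x\mu_t(y)$, in line with the others. For each $i$, put $c_i=\mu_i(e)\in G$ and define
\[
\mu_i'(y)=\mu_i(y)\,c_i^{-1}.
\]
Each $\mu_i'$ is a bijection $G\to G$, being the composition of $\mu_i$ with the right translation $z\mapsto zc_i^{-1}$. By construction $\mu_i'(e)=c_ic_i^{-1}=e$.

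Next I exhibit an isotopy from the original array to
\[
S'=\{(x,\,y,\,xy,\,x\mu'_1(y),\dotsc,x\mu'_t(y))\}.
\]
Take $\sigma\colon S\to S'$ to send the point indexed by $(x,y)$ to the point of $S'$ indexed by the same $(x,y)$. On the first three coordinates let $\sigma_1,\sigma_2,\sigma_3$ be the identity. On the coordinate carrying $x\mu_i(y)$ let $\sigma$ act by $z\mapsto zc_i^{-1}$. Since
\[
x\mu_i(y)\,c_i^{-1}=x\mu_i'(y),
\]
the relation $\pi'_\alpha\sigma=\sigma_\alpha\pi_\alpha$ holds coordinatewise. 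Each $\sigma_\alpha$ is a bijection, so $S'$ is isotopic to $S$.

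It remains to check that $S'$ is again a $G$-based set of MOLS. Isotopy preserves the orthogonal-array property: $\pi'_{\alpha\beta}\sigma=(\sigma_\alpha\times\sigma_\beta)\pi_{\alpha\beta}$ is a composite of bijections. Hence $S'$ is an element of $(t+3)$-OA$(|G|)$ of the required form, and the $\mu_i'$ are orthomorphisms in the sense of the definition. If one also wants the classical condition that $y\mapsto y^{-1}\mu_i'(y)$ be a bijection, it follows the same way, since this map is $y\mapsto y^{-1}\mu_i(y)$ followed by right multiplication by $c_i^{-1}$.

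There is no real obstacle here. The only point needing care is that the translation must be applied on the right. That way the left factor $x$ in $x\mu_i(y)$ is untouched, the array keeps the shape $(x,y,xy,x\mu_i'(y))$, and the translation is absorbed entirely into the symbol relabeling $\sigma_\alpha$.
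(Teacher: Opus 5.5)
Your proposal is correct and is essentially the paper's proof: both take the identity on the index set $G\times G$ and on the first three coordinates, and the right translation $z\mapsto z\mu_i(e)^{-1}$ on the coordinate of the $i$-th orthomorphism square, giving $\mu_i'(y)=\mu_i(y)\mu_i(e)^{-1}$. Your additional checks, that the new array is still $G$-based and that $y\mapsto y^{-1}\mu_i'(y)$ remains a bijection, go slightly beyond what the paper writes and are correct.
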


\begin{proof}
The isotopy $$(x,y, xy, x\mu_1(y),\dotsc,\mu_t(y))_{x,y\in G} \to (x,y, xy, x\mu'_1(y),\dotsc,\mu'_t(y))_{x,y\in G}$$ where $\mu'_i(e)=e$ is given by $\Id\colon G\times G\to G\times G$ and $f_i(g)=g\mu_i(e)^{-1}$ for $i=1,\dotsc, t$, while $f_{-2}, f_{-1}, f_0$ are $\Id$.  Thus, $\mu'_i(g)=\mu_i(g)\mu_i(e)^{-1}$.
\end{proof}
We will assume the statement of Lemma~\ref{lem:mu-on-identity} from now on.
\begin{theorem}
    Let $G$ be a finite group. Let $L$ be the Cayley table of $G$ and
    let $K$ be the Latin square defined by 
    \[
    K(x,y) = x\mu(y),
    \] 
    with $\mu\colon  G \to G$ a bijection. 
    Then any autotopy of $(L,K)$ is of the form
    \[
    \left(\alpha,\, \beta,\, \gamma,\, \delta \right) = \left(a\varphi,\, \varphi b^{-1},\, a\varphi b^{-1},\, a\varphi\mu(b^{-1})\right)
    \]
    where $a,b\in G$, $\varphi \in\Aut(G)$ such that 
    \[
       \varphi(\mu(y))\mu(b^{-1}) = \mu(\varphi(y)b^{-1}) \quad \text{ for all } y\in G.
    \]
\end{theorem}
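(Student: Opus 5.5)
An autotopy of the pair $(L,K)$ is a quadruple $(\alpha,\beta,\gamma,\delta)$ of bijections of $G$ that is simultaneously an autotopy of $L$ and of $K$ with the same row and column maps:
\[
\gamma(xy)=\alpha(x)\beta(y), \qquad \delta(x\mu(y))=\alpha(x)\beta(y\text{-side})
\]
made precise as $\delta(x\mu(y))=\alpha(x)\mu(\beta(y))$ for all $x,y\in G$. The plan is to first use only the condition on $L$, which by Theorem~\ref{thm:autotopy-group-single-square} forces
\[
(\alpha,\beta,\gamma)=(a\varphi,\ \varphi b^{-1},\ a\varphi b^{-1})
\]
for unique $a,b\in G$ and $\varphi\in\Aut(G)$. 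Here $\varphi b^{-1}$ means $y\mapsto\varphi(y)b^{-1}$. It then remains to determine $\delta$ and extract the constraint on $(a,b,\varphi)$ coming from $K$.

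\textbf{Determining $\delta$.} Substituting into the $K$-condition gives
\[
\delta(x\mu(y))=a\varphi(x)\,\mu(\varphi(y)b^{-1}).
\]
Set $y=e$ and use the normalization $\mu(e)=e$ from Lemma~\ref{lem:mu-on-identity}, which we are assuming. This gives $\delta(x)=a\varphi(x)\mu(b^{-1})$ for all $x$, i.e.\ $\delta=a\varphi\,\mu(b^{-1})$ in the paper's notation, matching the claimed form.

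\textbf{Extracting the constraint.} Plug this $\delta$ back in, writing $\varphi(x\mu(y))=\varphi(x)\varphi(\mu(y))$ since $\varphi$ is an automorphism:
\[
a\varphi(x)\varphi(\mu(y))\mu(b^{-1})=a\varphi(x)\mu(\varphi(y)b^{-1}).
\]
Cancelling $a\varphi(x)$ on the left yields exactly $\varphi(\mu(y))\mu(b^{-1})=\mu(\varphi(y)b^{-1})$ for all $y$. Conversely, for any $a,b,\varphi$ satisfying this identity the quadruple above is an autotopy, by reversing the computation. So the statement, and even its converse, follows.

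The only delicate point is bookkeeping of conventions: the $K$-condition must be written with the same column map $\beta$ (so that $\mu$ is applied to $\beta(y)$), and the normalization $\mu(e)=e$ is essential for the clean formula for $\delta$. Without it one would get $\delta(x)=a\varphi(x)\varphi(\mu(e))^{-1}\mu(b^{-1})$ instead. Everything else is direct substitution.
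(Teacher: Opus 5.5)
Your proposal is correct and follows the paper's proof essentially step for step: you obtain $(\alpha,\beta,\gamma)$ from Theorem~\ref{thm:autotopy-group-single-square}, set $y=e$ under the standing normalization $\mu(e)=e$ to get $\delta$, substitute back and cancel $a\varphi(x)$ to get the compatibility condition, and reverse the computation for the converse. Your remark that the normalization is genuinely needed for the stated formula for $\delta$ is accurate, and it is more explicit than the paper, which only invokes the normalization as a standing assumption.
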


\begin{proof}
    Consider the orthogonal array whose rows are $(x,\ y,\ xy,\ x\mu(y))$ for all $x, y \in G$. Let $(\alpha,\ \beta,\ \gamma,\ \delta)$ be an autotopy of the orthogonal pair. By Theorem~\ref{thm:autotopy-group-single-square}, an autotopy $(\alpha,\ \beta,\ \gamma)$ of $L$ has the form
    \[
        (\alpha,\ \beta,\ \gamma) = (a\varphi,\ \varphi b^{-1},\ a\varphi b^{-1}),
    \]
    for some fixed $a,b \in G$, and $\varphi \in \Aut(G)$.

    Since $(\alpha, \beta, \delta)$ is an autotopy of $K$, we must have
    \begin{align}\label{eqn:delta}
        \delta(x\mu(y)) = \alpha(x)\mu(\beta(y)) = a\varphi(x)\mu(\varphi(y)b^{-1}) \quad \text{for all } x, y \in G.
    \end{align}
    Observe that (set $y=e$):
   \begin{align}
        \delta(x) &= \delta(x\mu(e)) = \alpha(x)\mu(\beta(e)) = a\varphi(x)\mu(b^{-1})\\
        \intertext{so that ($x=x\mu(y))$:}
        \delta(x\mu(y)) &= a\varphi(x\mu(y))\mu(b^{-1})\label{eqn:delta-singel-variable}
    \end{align}
    Comparing~\eqref{eqn:delta} and \eqref{eqn:delta-singel-variable} yields
    \[
        a\varphi(x)\varphi(\mu(y))\mu(b^{-1})=a\varphi(x\mu(y))\mu(b^{-1}) = a\varphi(x)\mu(\varphi(y)b^{-1}), \text{ for all } x,y\in G,
    \]
    which is equivalent to
    \begin{align}\label{eqn:compatibility}
        \varphi(\mu(y))\mu(b^{-1}) = \mu(\varphi(y)b^{-1}), \text{ for all } y\in G.
    \end{align}
    To summarize,
    \[
    \left(\alpha,\, \beta,\, \gamma,\, \delta \right) = \left(a\varphi,\, \varphi b^{-1},\, a\varphi b^{-1},\, a\varphi\mu(b^{-1})\right)
    \]
    and~\eqref{eqn:compatibility} is satisfied for some  $a, b \in G$ and $\varphi \in \Aut(G)$.
    
    On the other hand, suppose that $a, b \in G$, $\varphi \in \Aut(G)$
    and~\eqref{eqn:compatibility} is satisfied. Let
    \[
    \left(\alpha,\, \beta,\, \gamma,\, \delta \right) = \left(a\varphi,\, \varphi b^{-1},\, a\varphi b^{-1},\, a\varphi\mu(b^{-1})\right).
    \]
    Theorem~\ref{thm:autotopy-group-single-square} shows that $\left(\alpha,\, \beta,\, \gamma \right)$ is an autotopy of $L$ so it is enough to show that $\left(\alpha,\, \beta,\, \delta \right)$ is an autotopy of $K$. Observe that $\alpha$, $\beta$, $\delta$ are bijections and satisfy
    \begin{align*}
    \alpha(x)\mu(\beta(y)) 
    &= a\varphi(x)\mu(\varphi(y)b^{-1}) \\
    &= a\varphi(x)\varphi(\mu(y))\mu(b^{-1}) \\
    &= a\varphi(x\mu(y))\mu(b^{-1}) \\
    &= \delta(x\mu(y)), 
    \end{align*}
    for all $x,y\in G$.
\end{proof}

\begin{corollary}\label{cor:mus}
Let $G$ be a finite group, and let $L$ denote its Cayley table.  
For $1 \leq i \leq t$, define Latin squares $K_i$ by
\[
K_i(x,y) = x \mu_i(y), \qquad x,y \in G,
\]
where each $\mu_i \colon  G \to G$ is a bijection.  Thus, $(L, K_1, \dotsc, K_t)$ is based on a group.
    Then any autotopy of $(L, K_1, \dotsc, K_t)$,  is of the form
    \[
    \left(\alpha,\, \beta,\, \gamma,\, \delta_1,\, \dotsc,\, \delta_t \right) = \left(a\varphi,\, \varphi b^{-1},\, a\varphi b^{-1},\, a\varphi\mu_1(b^{-1}),\, \dotsc,\, a\varphi\mu_t(b^{-1})\right),
    \]
    where $a,b\in G$, $\varphi \in\Aut(G)$ such that for all $i$ with $1\le i \le t$,
    \[
       \varphi(\mu_i(y))\mu_i(b^{-1}) = \mu_i(\varphi(y)b^{-1}) \quad \text{ for all } y\in G.
    \]

\end{corollary}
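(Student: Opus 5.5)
The plan is to reduce everything to the single-square case, the preceding theorem (the orthogonal pair $(L,K)$). An autotopy of the array $(x,y,xy,x\mu_1(y),\dotsc,x\mu_t(y))_{x,y\in G}$ is a tuple $(\alpha,\beta,\gamma,\delta_1,\dotsc,\delta_t)$ that preserves the set $S$ of rows. Forgetting coordinates is compatible with this: the first three components form an autotopy of $L$, and for each $i$ the tuple $(\alpha,\beta,\gamma,\delta_i)$ is an autotopy of the orthogonal pair $(L,K_i)$.

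First, Theorem~\ref{thm:autotopy-group-single-square} gives $(\alpha,\beta,\gamma)=(a\varphi,\varphi b^{-1},a\varphi b^{-1})$ for unique $a,b\in G$ and $\varphi\in\Aut(G)$. These data depend only on $(\alpha,\beta,\gamma)$, so they are the same for every $i$. Next, for each $i$ I apply the preceding theorem to the pair $(L,K_i)$. Its proof starts from this same decomposition of $(\alpha,\beta,\gamma)$. Setting $y=e$ and using $\mu_i(e)=e$ (Lemma~\ref{lem:mu-on-identity}) forces $\delta_i=a\varphi\mu_i(b^{-1})$, read as the map $z\mapsto a\varphi(z)\mu_i(b^{-1})$. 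It also forces the compatibility condition $\varphi(\mu_i(y))\mu_i(b^{-1})=\mu_i(\varphi(y)b^{-1})$ for all $y\in G$. Collecting these conclusions over $i=1,\dotsc,t$ gives exactly the stated form.

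For the converse, suppose $a,b,\varphi$ satisfy all $t$ compatibility conditions and define the tuple as stated. By the converse half of the preceding theorem, each $(\alpha,\beta,\gamma,\delta_i)$ preserves the array of $(L,K_i)$. Then every row $(x,y,xy,x\mu_1(y),\dotsc)$ is sent to $(\alpha(x),\beta(y),\alpha(x)\beta(y),\alpha(x)\mu_1(\beta(y)),\dotsc)$, which is again a row of $S$. So the tuple lies in $\Aut(S,X_i)$.

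There is no real obstacle here. The one point needing care is that $a$, $b$, $\varphi$ must be shared across all $i$. This holds because the decomposition in Theorem~\ref{thm:autotopy-group-single-square} is unique: $a=\alpha(e)$, $b=\beta(e)^{-1}$, and $\varphi$ is then determined by $\gamma$. So the conditions coming from different $i$ all concern the same data.
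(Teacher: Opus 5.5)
Your proposal is correct and matches the paper's (implicit) argument: apply the preceding theorem to each pair $(L,K_i)$, with $a=\alpha(e)$, $b=\beta(e)^{-1}$ and $\varphi$ fixed once by $(\alpha,\beta,\gamma)$ via Theorem~\ref{thm:autotopy-group-single-square}, so the same data serve every $i$. You were also right to invoke the standing normalization $\mu_i(e)=e$ from Lemma~\ref{lem:mu-on-identity}; without it the stated form of $\delta_i$ fails, since the identity autotopy would give $\delta_i\neq\mathrm{id}$.
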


\begin{corollary}
Let $(L, K_1, \dotsc, K_t)$ be based on a group.  If $\mu_i$ are group isomorphisms, then this set of MOLS is simply transitive.
\end{corollary}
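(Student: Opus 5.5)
The plan is to exhibit an explicit subgroup of the autotopy group that acts simply transitively on the orthogonal array
\[
S=\{(x,y,xy,x\mu_1(y),\dotsc,x\mu_t(y)) : x,y\in G\},
\]
which is a copy of $G\times G$ indexed by the pair $(x,y)$. Corollary~\ref{cor:mus} tells us exactly which triples $(a,b,\varphi)$ give autotopies. The natural candidate is the subgroup with $\varphi=\Id$, that is, the translations by $(a,b)\in G\times G$, provided the compatibility condition holds for these.

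First check the condition. With $\varphi=\Id$ it reads
\[
\mu_i(y)\mu_i(b^{-1})=\mu_i(yb^{-1})\quad\text{for all } y\in G.
\]
This holds for every $b$ precisely because each $\mu_i$ is a homomorphism. Hence every $(a,b)\in G\times G$ yields an autotopy
\[
\theta(a,b,\Id)=\bigl(a\,\cdot\,,\ \cdot\, b^{-1},\ a\,\cdot\, b^{-1},\ a\,\cdot\,\mu_1(b^{-1}),\dotsc\bigr).
\]
By Theorem~\ref{thm:autotopy-group-single-square} the map $\theta$ is a group homomorphism, and its restriction to $(G\times G)\times\{\Id\}$ is injective, since $a$ and $b$ are recovered from $\alpha(e)$ and $\beta(e)$. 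So we obtain a subgroup of $\Aut(S,X_i)$ isomorphic to $G\times G$.

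Next compute the action on $S$. The autotopy sends the point indexed by $(x,y)$ to the point indexed by $(ax,\,yb^{-1})$. The remaining coordinates are then consistent automatically: $ax\cdot yb^{-1}=a(xy)b^{-1}$, and
\[
ax\,\mu_i(yb^{-1})=a\,x\mu_i(y)\,\mu_i(b^{-1}),
\]
which is exactly the prescribed $\delta_i$ applied to $x\mu_i(y)$. The action $(x,y)\mapsto(ax,yb^{-1})$ of $G\times G$ on itself is simply transitive: given source $(x,y)$ and target $(x',y')$, the unique group element is $a=x'x^{-1}$, $b=y'^{-1}y$. This proves the set is simply transitive in the sense of Definition~\ref{def:simtransitive}.

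There is no real obstacle here. The only point needing care is the convention that $\mu_i(e)=e$ (Lemma~\ref{lem:mu-on-identity}), which is automatic for homomorphisms, together with the orientation of the right translation by $b^{-1}$ in the semidirect-product formula. One also notes that bijectivity of the $\mu_i$ is already part of the hypotheses, so they are genuine automorphisms. Alternatively, the packet viewpoint gives the same result: the disjoint packet is $(G\times G,\ \{e\}\times G,\ G\times\{e\},\ \Delta G,\ \{(\mu_i(g)^{-1},g)\})$, up to inversion conventions, and one checks pairwise trivial intersections and applies Corollary~\ref{cor:strans}. The direct autotopy argument is simpler and is the one I would write.
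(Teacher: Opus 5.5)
Your argument is correct and is the paper's proof, which simply notes that Corollary~\ref{cor:mus} puts $G\times G$ inside the autotopy group. You make the details explicit: $\varphi=\Id$ satisfies the compatibility condition because each $\mu_i$ is a homomorphism, the resulting copy of $G\times G$ embeds, and it acts on the array by $(x,y)\mapsto(ax,yb^{-1})$, which is simply transitive.

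One small slip in your optional packet remark: with your action and base point, the stabilizers of the last $t$ coordinates are $\{(\mu_i(g),g): g\in G\}$, which recovers $\Delta G$ when $\mu_i=\Id$. By contrast, $\{(\mu_i(g)^{-1},g)\}$ is not even a subgroup when $G$ is non-abelian.
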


\begin{proof}
By Corollary \ref{cor:mus}, the autotopy group contains $G\times G$.
\end{proof}

\begin{corollary}
Let $G$ be a finite group.  The autotopy group of a $G$-based set of MOLS is a subgroup of $(G \times G) \rtimes \Aut(G)$ and contains $(G\times \{e\})\rtimes \{e\}$.

\end{corollary}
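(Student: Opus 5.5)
Both claims follow from Corollary~\ref{cor:mus}, which identifies the autotopy group of $(L,K_1,\dotsc,K_t)$ with the set of triples $(a,b,\varphi)\in G\times G\times\Aut(G)$ satisfying the compatibility conditions
\[
\varphi(\mu_i(y))\mu_i(b^{-1})=\mu_i(\varphi(y)b^{-1})\quad\text{for all } y\in G,\ 1\le i\le t .
\]
For the first claim I will use the map $\theta$ from the proof of Theorem~\ref{thm:autotopy-group-single-square}. Restricting an autotopy of the whole array to its first three coordinates gives an autotopy of $L$. This restriction is injective, because an autotopy is determined by any two of its coordinates (as noted after Definition~\ref{def:autotopy-group}); indeed the $\delta_i$ are given explicitly by $a\varphi\mu_i(b^{-1})$. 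So the autotopy group is isomorphic to a subgroup of $\Aut(L)\cong (G\times G)\rtimes\Aut(G)$, namely the subgroup $\theta^{-1}$ of the set of compatible triples. The map being injective and a homomorphism, this gives subgroup status without separately checking closure of the compatibility conditions.

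For the second claim I will check that every triple $(a,e,\Id)$ with $a\in G$ satisfies the compatibility condition. This is where Lemma~\ref{lem:mu-on-identity} enters: it lets us assume $\mu_i(e)=e$. With $b=e$ and $\varphi=\Id$, the condition reads $\mu_i(y)\mu_i(e)=\mu_i(y)$, which holds exactly because $\mu_i(e)=e$. Hence $(a\,\cdot,\ \cdot,\ a\,\cdot,\ a\,\cdot,\dotsc,a\,\cdot)$, i.e.\ left multiplication by $a$ on the $x$, $xy$ and $x\mu_i(y)$ coordinates, is an autotopy. Directly, it sends the row $(x,y,xy,x\mu_i(y))$ to $(ax,y,axy,ax\mu_i(y))$, which is again a row of the array. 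Under $\theta$, these elements form exactly $(G\times\{e\})\rtimes\{e\}$.

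The main obstacle is bookkeeping rather than substance. Without the normalization $\mu_i(e)=e$, the formula for $\delta_i$ in Corollary~\ref{cor:mus} does not quite hold (the $y=e$ substitution used $\mu(e)=e$), and the statement is meant up to the isotopy of Lemma~\ref{lem:mu-on-identity}. I will state this explicitly. Left multiplication by $a$ remains an autotopy even without the normalization, as the direct row check shows, so the containment does not depend on it. For the subgroup claim, I would also note that isotopic arrays have conjugate autotopy groups, so the conclusion is well defined up to that identification.
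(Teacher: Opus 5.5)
Your proposal is correct and follows the route the paper intends. The paper gives no separate proof and treats the statement as immediate from Corollary~\ref{cor:mus}: restriction to the first three coordinates embeds the autotopy group in $(G\times G)\rtimes\Aut(G)$, and the triples $(a,e,\Id)$ satisfy the compatibility condition because of the normalization $\mu_i(e)=e$ from Lemma~\ref{lem:mu-on-identity}. Your direct row check, which shows that left multiplication by $a$ is an autotopy even without that normalization, is correct and slightly sharpens the statement.
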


\subsection{Examples of the construction}

This section collects some interesting applications of our construction.

\begin{example}[Simply transitive Latin square of order $6$ that is not isotopic to the Cayley table of a group]\label{example:simply-transitive-not-group}
Consider the symmetric group $S_3$ acting on $\{1,2,3\}$. 
Let
$
G = S_3 \times S_3
$ where $k=1$.

The subgroups of $G$ are
\begin{align*}
  H_1 &= \langle (2,3)\rangle \times \langle (1,3,2) \rangle \cong \mathbb{Z}_6,\\
  H_2 &= \langle (1,3,2)\rangle \times \langle (2,3) \rangle \cong \mathbb{Z}_6,\\
  H_3 &= \langle ((1,2),(2,3)), ((1,2,3),(1,3,2)) \rangle \cong S_3.
\end{align*}
Furthermore, $|H_i|=6$ and $H_i\cap H_j=\{e\} =:K$ for $i\neq j$.

We will construct $L$  a Latin square of order $6$ from the above data as described in Remark \ref{rem:construct}. Let $\mathcal{R}=G / H_1$ be the row indices of $L$, $\mathcal{C}=G/H_2$ be the column indices of $L$, and $\mathcal{S}=G/H_3$ be the symbols of $L$.  
For $g\in G/K$, consider the triple $(gH_1,gH_2,gH_3)$. Since $H_1\cap H_2=\{e\}$ and $H_1H_2=G$, the map 
\begin{align*}
    & G/K\to\mathcal R\times\mathcal C, \\
    & g\mapsto (gH_1,gH_2)
\end{align*}
is a bijection. Hence for each $(R,C)\in\mathcal R\times\mathcal C$ there is a unique $g$ with $(R,C)=(gH_1,gH_2)$. We construct the Latin square $L$ of order $6$ with the symbol allocation rule
\[
L(R,C):=gH_3\in\mathcal S.
\]
After enumerating $\mathcal{R}, \mathcal{C}, \mathcal{S}$ appropriately, the Latin square is: 
\[
L=
\begin{bmatrix}
0 & 1 & 2 & 3 & 4 & 5 \\ 
1 & 2 & 0 & 5 & 3 & 4 \\
2 & 0 & 1 & 4 & 5 & 3 \\
3 & 4 & 5 & 1 & 2 & 0 \\
4 & 5 & 3 & 0 & 1 & 2 \\
5 & 3 & 4 & 2 & 0 & 1 
\end{bmatrix}
\]
Note that $L$ is an example of a simply transitive Latin square that is not isotopic to a Cayley table of a group (as the multiplication table is not associative).
\end{example}

\begin{remark}
The smallest order of a simply transitive Latin square that is not isotopic to the Cayley table of a group is $6$.
\end{remark}
\begin{example}[Transitive but not simply transitive Latin square of order $9$]\label{exa:order-nine}
There is no Latin square of order $9$ that is simply transitive but not isotopic to a Cayley table of a group. An example of a Latin square of order $9$ that is transitive but not simply transitive is
\[
\begin{bmatrix}
0 & 1 & 2 & 3 & 4 & 5 & 6 & 7 & 8\\
1 & 7 & 4 & 5 & 6 & 8 & 2 & 0 & 3\\
2 & 3 & 6 & 7 & 8 & 4 & 0 & 1 & 5\\
3 & 5 & 7 & 4 & 0 & 6 & 1 & 8 & 2\\
4 & 6 & 8 & 0 & 3 & 1 & 5 & 2 & 7\\
5 & 8 & 0 & 6 & 1 & 2 & 7 & 3 & 4\\
6 & 2 & 3 & 1 & 5 & 7 & 8 & 4 & 0\\
7 & 4 & 1 & 8 & 2 & 0 & 3 & 5 & 6\\
8 & 0 & 5 & 2 & 7 & 3 & 4 & 6 & 1
\end{bmatrix}
\]
and it is constructed with $n=9$ and $k=3$. Its construction proceeds as in Example~\ref{example:simply-transitive-not-group}, we let
\begin{align*}
G= (\mathbb{Z}_3 \times (\mathbb{Z}_9 \rtimes \mathbb{Z}_3)) \rtimes \mathbb{Z}_3.
\end{align*}
GAP~\cite{GAP4.13.0} gives $G$ in terms of its generators inside $S_{18}$:
\begin{align*}
G = &\,\langle ( 1, 2, 4, 3, 5, 7, 6, 8, 9)(10,13,17,15,18,14,12,16,11), \\
&( 2, 5, 8)(10,11,13)(12,14,16)(15,17,18),\\
&( 2, 8, 5)( 4, 7, 9)(11,17,14)(13,16,18), \\
&( 1, 3, 6)( 2, 5, 8)( 4, 7, 9)(10,15,12)(11,17,14)(13,18,16),\\
&(10,12,15)(11,14,17)(13,16,18) \rangle.
\end{align*}

The subgroups of $G$, written as elements inside $S_{18}$, are:
\begin{align*}
H_1=&\,\langle  ( 1, 6, 3)( 4, 9, 7)(10,14,18)(11,16,15)(12,17,13), \\
&(10,15,12)(11,17,14)(13,18,16), \\
&( 2, 5, 8)( 4, 9, 7)(11,14,17)(13,18,16) \rangle \\
\cong & \,(\mathbb{Z}_3 \times \mathbb{Z}_3) \rtimes \mathbb{Z}_3\\
H_2=&\,\langle ( 1, 6, 3)( 4, 7, 9)(10,12,15)(11,17,14), \\
&( 1, 2, 4, 3, 5, 7, 6, 8, 9)(10,13,17,15,18,14,12,16,11), \\
&( 1, 3, 6)( 2, 5, 8)( 4, 7, 9)(10,15,12)(11,17,14)(13,18,16) \rangle \\
\cong & \,\mathbb{Z}_9 \rtimes \mathbb{Z}_3\\
H_3=& \,\langle  ( 1, 5, 4, 6, 2, 9, 3, 8, 7)(11,17,14), \\
&(1,6,3)(2,8,5)(4,9,7), \\
&( 1, 6, 3)( 4, 7, 9)(11,14,17)(13,18,16) \rangle \\
\cong & \,\mathbb{Z}_9 \rtimes \mathbb{Z}_3 \\
K = &\,\langle ( 2, 5, 8)( 4, 9, 7)(11,14,17)(13,18,16) \rangle \\
\cong &\,\mathbb{Z}_3.
\end{align*}

\begin{remark}
The smallest integer $n$ for which there exists a transitive but not simply transitive Latin square of order $n$ is $n=9$.
\end{remark}

\end{example}
\begin{example}[Transitive and simply transitive Latin squares of order $10$]\label{exa:order-ten}
Let
\[
G = D_5 \times D_5,
\]
and $k=1$.  We use the cycle notation for the elements of $D_5$ considered as a subgroup of $S_5$; the subgroups of $G$ are:
\begin{align*}
H_1 &= \langle (2,5)(3,4)\rangle \times  \langle(1,2,3,4,5) \rangle \cong \mathbb{Z}_{10},\\
H_2 &= \langle (1,4,2,5,3)\rangle \times \langle (2,5)(3,4) \rangle \cong \mathbb{Z}_{10},\\
H_3 &= \langle ((1,3)(4,5),(2,5)(3,4)), ((1,4,2,5,3),(1,2,3,4,5)) \rangle  \cong D_5.
\end{align*}
We proceed as in Example~\ref{example:simply-transitive-not-group} to construct the Latin square $M$, see below.

\begin{remark}
Note that both examples $G=S_3\times S_3=D_3\times D_3$ and $G=D_5\times D_5$ are equivalent to the following. Recall that $D_n=\langle r,s\mid r^n=s^2=(sr)^2=1 \rangle$ and set $G=D_n\times D_n$, then $H_1=\langle r\rangle\times\langle s\rangle$, $H_2=\langle r^2s\rangle\times\langle r\rangle$, and $H_3=\Delta D_n$ (the diagonal embedding of $D_n$ into $G$).
\end{remark}

Now, let
\begin{align*}
G'= (\mathbb{Z}_5 \times \mathbb{Z}_5) \rtimes D_4,
\end{align*}
and $k=2$.
Note that GAP~\cite{GAP4.13.0} gives $G'$ in terms of its generators inside $S_{25}$:
\begin{align*}
G' = &\, \langle  ( 2,10)( 3, 4)( 5,18)( 6,11)( 7,15)( 9,23)(12,22)(13,16)(17,25)
(19,21),\\
&( 2, 3)( 4, 6)( 7,10)( 8, 9)(11,15)(12,14)(16,19)(17,18)(20,22)
(23,24), \\
&( 2,11)( 3,15)( 4, 7)( 5,25)( 6,10)( 8,24)( 9,23)(12,22)(13,21)
(14,20)(16,19)(17,18),\\
&( 1, 2, 4, 7,11)( 3, 5, 8,12,16)( 6, 9,13,17,20)
(10,14,18,21,23)(15,19,22,24,25),\\
&( 1, 3, 6,10,15)( 2, 5, 9,14,19)
( 4, 8,13,18,22)( 7,12,17,21,24)(11,16,20,23,25) \rangle  \\
\cong & \, (\mathbb{Z}_5 \times \mathbb{Z}_5) \rtimes D_4.
\end{align*}
The subgroups of $G'$ are (written again as elements within $S_{25}$):

\begin{align*}
H_1'=&\,\langle ( 2, 6)( 3, 7)( 4,15)( 5,17)( 8,24)(10,11)(13,19)(14,20)(16,21)
(18,25),\\
&( 2,10)( 3, 4)( 5,18)( 6,11)( 7,15)( 9,23)(12,22)(13,16)(17,25)
(19,21), \\
&( 1,14, 8,24,20)( 2,18,12,25, 6)( 3,19,13, 7,23)( 4,21,16,15, 9)
( 5,22,17,11,10) \rangle \\
\cong & \,  D_{10}, \\
H_2'=&\,\langle ( 2, 3)( 4, 6)( 7,10)( 8, 9)(11,15)(12,14)(16,19)(17,18)(20,22)
(23,24),\\
&( 2,11)( 3,15)( 4, 7)( 5,25)( 6,10)( 8,24)( 9,23)(12,22)(13,21)
(14,20)(16,19)(17,18), \\
&( 1,25,21,13, 5)( 2,15,23,17, 8)( 3,11,24,18, 9)
( 4,19,10,20,12)( 6,16, 7,22,14) \rangle \\
\cong & \, D_{10},\\
H_3'=&\,\langle  ( 2, 7,11, 4)( 3, 6,15,10)( 5,17,25,18)( 8, 9,24,23)(12,20,22,14)
(13,19,21,16),\\
&( 2,11)( 3,15)( 4, 7)( 5,25)( 6,10)( 8,24)( 9,23)(12,22)
(13,21)(14,20)(16,19)(17,18),\\
&( 1,11, 7, 4, 2)( 3,16,12, 8, 5)
( 6,20,17,13, 9)(10,23,21,18,14)(15,25,24,22,19) \rangle \\
\cong & \, \mathbb{Z}_{5} \rtimes \mathbb{Z}_4,\\
K' =& \,\langle ( 2,11)( 3,15)( 4, 7)( 5,25)( 6,10)( 8,24)( 9,23)(12,22)(13,21)
(14,20)(16,19)(17,18) \rangle \\
\cong & \, \mathbb{Z}_2 .
\end{align*}
and they are used to construct the Latin square $M'$. We have:
\[
M=
\begin{bmatrix}
0 & 1 & 2 & 3 & 4 & 5 & 6 & 7 & 8 & 9\\
1 & 3 & 0 & 4 & 2 & 7 & 5 & 9 & 6 & 8\\
2 & 0 & 4 & 1 & 3 & 6 & 8 & 5 & 9 & 7\\
3 & 4 & 1 & 2 & 0 & 9 & 7 & 8 & 5 & 6\\
4 & 2 & 3 & 0 & 1 & 8 & 9 & 6 & 7 & 5\\
5 & 6 & 7 & 8 & 9 & 4 & 2 & 3 & 0 & 1\\
6 & 8 & 5 & 9 & 7 & 3 & 4 & 1 & 2 & 0\\
7 & 5 & 9 & 6 & 8 & 2 & 0 & 4 & 1 & 3\\
8 & 9 & 6 & 7 & 5 & 1 & 3 & 0 & 4 & 2\\
9 & 7 & 8 & 5 & 6 & 0 & 1 & 2 & 3 & 4
\end{bmatrix}
\qquad
M'=
\begin{bmatrix}
0 & 1 & 2 & 3 & 4 & 5 & 6 & 7 & 8 & 9\\
1 & 0 & 6 & 9 & 8 & 7 & 3 & 4 & 5 & 2\\
2 & 4 & 7 & 0 & 9 & 1 & 5 & 8 & 3 & 6\\
3 & 5 & 0 & 8 & 1 & 6 & 9 & 2 & 7 & 4\\
4 & 3 & 5 & 6 & 7 & 2 & 8 & 9 & 1 & 0\\
5 & 2 & 9 & 4 & 3 & 8 & 0 & 1 & 6 & 7\\
6 & 7 & 4 & 1 & 0 & 3 & 2 & 5 & 9 & 8\\
7 & 9 & 8 & 2 & 6 & 4 & 1 & 3 & 0 & 5\\
8 & 6 & 3 & 7 & 5 & 9 & 4 & 0 & 2 & 1\\
9 & 8 & 1 & 5 & 2 & 0 & 7 & 6 & 4 & 3
\end{bmatrix}.
\]
Note that $M$ is an example of a simply transitive Latin square that is not isotopic to a  Cayley table of a group, and $M'$ is an example of a transitive but not simply transitive Latin square. We determined that they belong to different main classes by reducing them to graphs (following the method of Egan and Wanless~\cite{Egan2015}) and checking if the graphs are isomorphic using SageMath~\cite{sagemath}.
\end{example}

\begin{remark}
The smallest order $n$ for which there exist both a simply transitive Latin square that is not isotopic to the Cayley table of a group and a transitive but not simply transitive Latin square is $n=10$.
\end{remark}

\subsection{Computational construction and classification pipeline}\label{sec:method}

We search for Latin squares that arise from triples of subgroups inside a finite group, normalize these Latin squares, and classify them. All computations were carried out in \textsf{SageMath}~\cite{sagemath} with direct calls to \textsf{GAP}~\cite{GAP4.13.0}. The pipeline comprises:
\begin{itemize}
  \item enumerating candidate groups and subgroup triples;
  \item precomputing cosets and assembling an orthogonal-array-like incidence structure;
  \item converting the orthogonal-array-like incidence structure into Latin squares;
  \item normalizing the Latin squares;
  \item verifying associativity;
  \item grouping non-associative outputs into main classes (paratopy) via canonical graph certificates.
\end{itemize}
The program takes $n$ and $k$ from the user to fetch groups of order $k n^2$ and for each of those, we try the procedure.

\subsubsection{Group and subgroup search}
Let $G$ be a finite group and $K\leqslant G$ a subgroup of order $k$. We target triples $(H_1,H_2,H_3)$ of subgroups of order $kn$ such that
\[
  H_i\cap H_j = K \qquad \text{for all } i\ne j.
\]
For each order $kn^2$, we enumerate groups using the \textsf{GAP} \texttt{SmallGroup} library:
\[
  \texttt{NrSmallGroups}(kn^2),\qquad \texttt{SmallGroup}(kn^2,\mathrm{index}),
\]
and mapped each to a permutation group (via \texttt{IsomorphismPermGroup}) for human readability. For each $G$, we form
\[
  \mathcal H=\{H\leqslant G: |H|=kn\},\qquad \mathcal K=\{K\leqslant G: |K|=k\}.
\]
Fixing $K\in\mathcal K$, we filter $\mathcal H$ to
\[
  \mathcal H(K)=\{H\in\mathcal H: H\cap K=K\}.
\]
We then build a graph $\Gamma_K$ with vertex set $\mathcal H(K)$ and edges $\{H,H'\}$ whenever $H\cap H' = K$. Cliques of size $3$ in $\Gamma_K$ are exactly the subgroup triples $(H_1,H_2,H_3)$ we seek. This step is implemented in Sage~\cite{sagemath} by constructing $\Gamma_K$ and extracting all $3$-cliques. We seek maximum cliques when we are interested in a set of mutually orthogonal Latin squares. 

To control memory, we periodically invoked GAP's~\cite{GAP4.13.0} garbage collector \texttt{GASMAN('collect')}.

\subsubsection{Construction of Latin squares}

Given a fixed $G$ and a list of candidate subgroups with their pairwise intersection $K$, we precompute all left cosets $gH$ for every $g\in G/K$ and every $H$ in the pool. We cache a dictionary
  $(g,H)\longmapsto gH$,
with a canonical identifier for each subgroup (obtained by serializing its elements) together with the string representation of $g$. This precomputation eliminates repeated coset construction across different triples.

For each triple $(H_1,H_2,H_3)$ and each $g\in G/K$, we form
  $\bigl(gH_1,\ gH_2,\ gH_3\bigr)$.
Collecting these yields an orthogonal-array-like list of triples
\[
  \mathcal T=\{(R_g,C_g,S_g)\colon g\in G/K\},\qquad R_g=gH_1,\ C_g=gH_2,\ S_g=gH_3.
\]
So we convert $\mathcal T$ to a rectangular array by taking the distinct values of the first and second coordinates as row and column labels (sorted deterministically), and placing the third coordinate as the symbol. Concretely, in GAP~\cite{GAP4.13.0} we set
\[
  \text{rows}=\{gH_1: g\in G/K\},\quad \text{cols}=\{gH_2: g\in G/K\},
  \]
  \[
  L[r,c]=S_g \text{ if } (R_g,C_g)=(\text{rows}[r],\text{cols}[c])
\]
for a Latin square $L$ of order $n$.

Entries of $L$ are cosets. To obtain a numeric Latin square on $\{0,\dots,n-1\}$, we reindexed symbols using the first row as reference: if the first row is $[x_0,\dots,x_{n-1}]$, we map $x_j\mapsto j$ and apply this to all cells. We then reduce the square by permuting rows so that the first column is appropriately ordered to make $L$ a reduced Latin square. 
As a safety check, we verify the Latin property (distinct symbols in every row and column) prior to reindexing.

To check whether the resulting table is isotopic to a group table, we check associativity:
\[
  (a*b)*c = a*(b*c)\qquad \text{for all}\quad a,b,c\in\{0,\dots,n-1\}.
\]
We also implement the classical quadrangle criterion 
as an optional additional filter but this is computationally expensive for large $n$. We record Latin squares that fail associativity as those that are not isotopic to a Cayley table of a group.

\subsubsection{Classification of Latin squares}

To identify main class representatives among non-associative outputs, we use canonical graph certificates. 
The reduction from a $(t-2)$-MOLS of order $n$ to a graph is described using
an orthogonal array by Egan and Wanless~\cite{Egan2015}.  
Let $A$ be the orthogonal array corresponding to the $(t-2)$-MOLS of order $n$. 
Define an undirected graph $G_{A}$ corresponding to $A$. The vertices of $G_{A}$ are of three types:
\begin{itemize}
    \item $t$ type $1$ vertices that correspond to the columns of $A$,
    \item $tn$ type $2$ vertices that correspond to the symbols in each of the columns of $A$, and
    \item $n^2$ type $3$ vertices that correspond to the rows of $A$.
\end{itemize}
Each type $1$ vertex is joined to the $n$ type $2$ vertices that correspond to the symbols in its column. Each type $3$ vertex is connected to the $t$ type $2$ vertices that correspond to the symbols in its row. 
Vertices are coloured according to their type so that isomorphisms are not allowed to change the type of a vertex. The case of a Latin square is just $t=3$.

We use SageMath~\cite{sagemath} to determine if two graphs are isomorphic to decide their equivalence and also to compute the autotopy group of the $(t-2)$-MOLS.  
For a Latin square, we compute a canonical label with the fixed partition for distinct sets of vertices in the construction of the graph and use the resulting certificate as the main class invariant. 

\subsubsection{Implementation and parameters}
\begin{itemize}
  \item \emph{Environment:} SageMath~\cite{sagemath} with GAP~\cite{GAP4.13.0} for group computations; monitor memory via \texttt{psutil} and \texttt{GASMAN('collect')}.
  \item \emph{Parameters:} $n$, $k$. For each group $G$ of order $kn^2$, consider all subgroups $H_i$ of orders $kn$ and $K$ of order $k$.
  \item \emph{Enumeration:} Iterate groups by increasing \texttt{SmallGroup} index; for each $K$, then search $3$-cliques in $\Gamma_K$ to fetch all triples $(H_1,H_2,H_3)$.
  \item \emph{Coset caching:} For each $G/K$, cache all $(g,H)\mapsto gH$ once and reuse across triples via canonical subgroup identifiers.
  \item \emph{Outputs:} For each triple, we print $G$ (structure description), the three subgroups, the reduced Latin square, and an associativity verdict. A classification of all non-associative Latin reduced squares into main classes is given. The program generates all associative squares by design so there is no need to save those.
\end{itemize}
Our scripts are freely available at 
\begin{center}
    \url{https://github.com/Mansaring/LatinSquareGroup.git}
\end{center} 
and we encourage the reader to replicate the results.

\subsection{Autotopy group data and summary tables}\label{sec:tables}

Here we gather the two summary tables cited earlier, namely Tables~\ref{tbl:transitivity-classification} and~\ref{tbl:autotopy-mols}. Table~\ref{tbl:transitivity-classification} lists Latin squares of order at most $10$ classified by the action of their autotopy groups: for each order we record the number of main classes, and then partition these into classes that are isotopic to a Cayley table of a group, classes that are simply transitive but not isotopic to a Cayley table, classes that are transitive but not simply transitive, and classes that are non-transitive.

Table~\ref{tbl:autotopy-mols} presents representative examples of relatively large sets of MOLS of non-prime-power order together with the order of their autotopy groups. For each order we list the size of the MOLS set, the corresponding autotopy group order, and the source of the construction. One noteworthy feature is that the only example in the table in which a proper subset is transitive occurs for the seven MOLS of order $56$~\cite{MR2716602, mills1977some}. In our computations, we found that a transitive subcollection of six MOLS exists within this family.

\begin{table}[t]
  \centering
  \resizebox{\textwidth}{!}{%
  \begin{tabular}{cccccc}
    \toprule
    Order & Main class & Group & Simply transitive & Transitive but & Non-transitive \\
      &   &  & but not group & not simply & \\
    \midrule
    2 & 1 & 1 & 0 & 0 & 0 \\
    3 & 1 & 1 & 0 & 0 & 0 \\
    4 & 2 & 2 & 0 & 0 & 0 \\
    5 & 2 & 1 & 0 & 0 & 1 \\
    6 & 12 & 2 & 1 & 0 & 9 \\
    7 & 147 & 1 & 0 & 0 & 146 \\
    8 & 283,657 & 5 & 6 & 0 & 283646 \\
    9 & 19,270,853,541 & 2 & 0 &  $\ge $1 & $\star$ \\
    10 &  34,817,397,894,749,939 & 2 & 1 & $\ge $1  & $\star$ \\
    \bottomrule
  \end{tabular}}
  \caption{Classification of Latin squares by the action of the autotopy group. In the table, $\star$ indicates the counts that we were unable to obtain.}
  \label{tbl:transitivity-classification}
\end{table}
\begin{table}[t]
  \centering
  \resizebox{0.8\textwidth}{!}{%
  \begin{tabular}{cccc}
    \toprule
    Order $n$  & Size of MOLS set & Autotopy group order  & Source\\
    \midrule
10 & 2 & 1 &\cite{anderson1990combinatorial}  \\
12  & 5 & 12 & \cite{johnson1961orthomorphisms,bose1960methods}\\
14 & 3 & 13 & \cite{todorov1985three} \\
14  & 4 & 7  & \cite{MR2945714}\\
15  & 4 & 14 & \cite{MR2381416, schellenberg1978four}\\
15  & 4 & 30 & \cite{sagemath} \\
18 & 3 & 9 & \cite{wang1978self} \\
20  & 4 & 19 & \cite{MR989448,MR1239517}\\
21  & 4 & 21 & \cite{parker1959construction} \\
21  & 5 & 21 & \cite{MR1172873} \\
22  & 2 & 21 & \cite{MR111695}\\
22  & 3 & 42 & \cite{MR2716602,Abel1996ThreeMO} \\
24  & 7 & 48 & \cite{MR2036650}\\
26 & 4 & 21 & \cite{MR1315648}  \\
28  & 5 & 28 & \cite{MR2381416}\\
30  & 4 & 25 & \cite{MR1239517}\\
33  & 5 & 33 & \cite{MR2155894} \\
34  & 4 & 33 & \cite{MR2381416}\\
35  & 5 & 70 & \cite{MR1373522}\\
36  & 8 & 324 & \cite{MR2036650}\\
38  & 4 & 37 & \cite{MR1239517}\\
39  & 5 & 78 & \cite{MR2155894} \\
40  & 7 & 160 & \cite{MR1296950} \\
42  & 5 & 35 & \cite{MR2381416}\\
44  & 5 & 44 & \cite{MR2381416}\\
45  & 6 & 135 & \cite{MR2041869}\\
46  & 4 & 37 & \cite{chen2002existence}\\
48  & 8 & 192 & \cite{MR2311192}\\
50  & 6 & 43 & \cite{chen2002existence} \\
51  & 5 & 102 & \cite{MR2155894}\\
52  & 5 & 52 & \cite{MR2381416}\\
54  & 5 & 45 & \cite{MR2381416}\\
55  & 6 & 110 & \cite{MR1752737}\\
56  & 7 & 448 & \cite{MR2716602, mills1977some}\\
62  & 5 & 54 & \cite{MR2381416}\\
75 & 7 & 375 & \cite{MR2036650}  \\
80  & 9 & 640 & \cite{MR2716602,MR1296950} \\
    \bottomrule
  \end{tabular}}
  \caption{Autotopy group orders of known MOLS of non-prime power orders.}
  \label{tbl:autotopy-mols}
\end{table}

\FloatBarrier

\bibliographystyle{IEEEtran}
\bibliography{bibliography}

\end{document}